 \newtheorem{theorem}{Theorem}[section]
 \newtheorem{lemma}[theorem]{Lemma}
 \newtheorem{proposition}[theorem]{Proposition}
 \theoremstyle{definition}
 \newtheorem{definition}[theorem]{Definition}
 \theoremstyle{remark}
 \newtheorem{remark}[theorem]{Remark}
 \numberwithin{equation}{section}
\let\mathcal \undefined
\def\mathcal{\mathscr}
\let\emptyset \undefined
\let\ge       \undefined
\let\le       \undefined
\let\geq\ge
\newcommand{\g}{\gamma}
\renewcommand{\d}{\delta}
\newcommand{\e}{\varepsilon}
\newcommand{\R}{{\mathbb R}}
\newcommand{\cd}{\cdot}
\newcommand{\mE}{\mathbb{E}}
\newcommand{\dbR}{{\mathop{\rm l\negthinspace R}}}
\newcommand{\E}{{\mathbb E}}
\newcommand{\calL}{{\mathscr L}}
\renewcommand{\P}{{\mathbb P}}
\newcommand{\F}{{\mathbb F}}
\newcommand{\calF}{{\mathscr F}}
\newcommand{\n}{\Vert}
\newcommand{\one}{{{\bf 1}}}
\newcommand{\s}{^*}
\newcommand{\lb}{\langle}
\newcommand{\rb}{\rangle}
\newcommand{\iprod}[2]{(#1|#2)}
\newcommand{\ga}{\gamma}
\newcommand{\ggX}{\ga(0,T;X)}
\newcommand{\ud}{\,{\rm d}}
\newcommand{\om}{\omega}
\newcommand{\Om}{\Omega}
\newcommand{\wt}{\widetilde}
\newcommand{\Dom}{\mathsf{D}}
\begin{document}

 \title[BSEEs in UMD Banach spaces]{Backward stochastic evolution equations\\ in UMD Banach spaces}
 
 \author{Qi L\"u}
\address{School of Mathematics, Sichuan University, Chengdu 610064, China}
 \email{lu@scu.edu.cn}

 \author{Jan van Neerven}
 \address{Delft University of Technology, P.O. Box 5031, 2600 GA Delft, The Netherlands}
 \email{J.M.A.M.vanNeerven@TUDelft.nl}

 \date{\today}
 \thanks{The first-named author is supported by the NSF of China under grant 11471231 and Grant MTM2014-52347 of the MICINN, Spain. 
This paper was started while the second-named author visited Sichuan University. He would like to thank the School of Mathematics for its kind hospitality. }
 \keywords{Backward stochastic evolution equations, Brownian filtration, stochastic integration in UMD Banach spaces, 
$\gamma$-radonifying operators, $\gamma$-bound\-ed\-ness}
 \subjclass{Primary: 60H15, Secondary: 34F05, 47D06}

\begin{abstract} 
Extending results of Pardoux and Peng \cite{PP} and Hu and Peng \cite{HP}, we prove well-posedness results
for backward stochastic evolution equations in UMD Banach spaces.
\end{abstract}

\dedicatory{Dedicated to Ben de Pagter on the occasion of his 65th birthday}

\date{\today}

%%% ----------------------------------------------------------------------
\maketitle
%%% ----------------------------------------------------------------------

\section{Introduction}

In this paper we extend the classical results of Pardoux and Peng \cite{PP} and Hu and Peng \cite{HP} on backward stochastic differential equations to the UMD-valued setting.

We consider backward stochastic evolution equations (BSEEs) of the form
\begin{equation}\label{eq:BSDE-intr}\tag{BSEE}
\begin{cases}
 \ud U(t) + AU(t)\ud t = f(t,U(t),V(t))\ud t + V(t)\ud W(t), \quad t\in [0,T], \\
  U(T) = u_T,\end{cases}
\end{equation}
where $-A$ is the generator of a $C_0$-semigroup $S=(S(t))_{t\ge 0}$ on a UMD Banach space $X$ and $W= (W(t))_{t\in [0,T]}$ is a standard Brownian motion. 
Our results extend to finite-dimensional Brownian motions and, more generally, to cylindrical Brownian motions without
difficulty, but we do not pursue this here in order to keep the presentation as simple as possible.
Denoting by $\F = \{\calF_t\}_{t\in [0,T]}$ the augmented filtration 
generated by the Brownian motion $W$, the final value $u_T$ is taken from 
$L^p(\Om,\calF_T;X)$, the closed subspace $L^p(\Om;X)$ of all functions having a
strongly $\calF_T$-measurable pointwise defined representative. The mapping $f$ is assumed to be 
$\F$-adapted and to satisfy suitable integrability and Lipschitz continuity requirements with respect to the natural norm arising from the $L^p$-stochastic integral in $X$. We will be interested in 
$L^p$-solutions $(U,V)$ with values in $X$.

BSEEs, as infinite dimensional extensions of backward
stochastic differential equations,
arise in many applications related to stochastic
control.
For instance, the
Duncan--Mortensen--Zakai filtration equation for
the optimal control problem of partially
observed stochastic differential equations is a
linear BSEE (see, e.g., \cite{Bensoussan}); in order to establish the maximum principle for the
  optimal control problem of stochastic evolution equations one needs to introduce a linear BSEE  as the adjoint
equation (see, e.g., \cite{LZ2,Zhou1}); in the study of controlled non-Markovian SDEs
 the stochastic Hamilton--Jacobi--Bellman equation is a class of fully nonlinear BSEEs (see, e.g., \cite {EK,Peng}); 
 and when the coefficients of the stochastic differential equation describing the stock price are  random
processes, the stochastic version of the
 Black-Scholes formula for option pricing is a
 BSEE (see, e.g., \cite{MaYong}).

In a Hilbert space setting, BSEEs have already be studied in
\cite{HP}; see also \cite{AnhGreYon, AnhYon, FuhTes, LZ1, LZ2, LZ3} and the references cited therein. 
In \cite{DT,MaYong,MaYong1}
the existence of a solution
in the Sobolev space $W^{m,2}$ is obtained, in \cite {Azimi, DQT} the
existence of a solution in $L^q$, and in \cite{TW} the
existence of a solution in H\"older spaces.

In the present paper, we study BSEEs in the abstract
framework of evolution equations on UMD Banach spaces. The main results in
\cite{DQT,DT,MaYong,MaYong1} are covered by
our results. Furthermore, our results
can be used to show the well-posedness of many
other backward stochastic partial differential
equations, such as $2m$-order backward
stochastic parabolic equations.

The second-named author would like to use this opportunity to express warm-felt gratitude to Ben 
for invaluable mentorship and support throughout an entire mathematical career.
Thanks for all, Ben!

\section{Preliminaries}\label{sec:preliminaries}

In this section we recall some useful concepts
and results which will be used in the course of the paper. 
Proofs and more details, as well as references
to the literature, can be
found in the papers \cite{Burk, KW, Neerven, NVW15b},
the lecture notes \cite{DHP, KuWe}, and 
the monographs \cite{HNVW1, HNVW2, Pisier}.

Unless stated otherwise, all vector spaces are assumed to be real.
We will always identify Hilbert spaces with their duals by
means of the Riesz representation theorem.

\subsection{$\gamma$-Boundedness}

Let $X$ and $Y$ be Banach spaces and let
$\{\g_n\}_{n\ge 1}$ be Gaussian sequence (i.e., a
sequence of independent real-valued standard
Gaussian random variables).

\begin{definition}
A family $\mathscr{T}$ of bounded linear
operators from $X$ to $Y$ is called {\em
$\g$-bounded} if there exists a constant $C\ge
0$ such that for all finite sequences
$\{x_n\}_{n=1}^N$ in $X$ and $\{T_n\}_{n=1}^N$ in
${\mathscr {T}}$ we have
$$
\mE \Big\n \sum_{n=1}^N \g_n T_n x_n\Big\n^2 \le
C^2\mE \Big\n \sum_{n=1}^N \g_n x_n\Big\n^2.
$$
\end{definition}

Clearly, every
$\g$-bounded family of bounded linear operators
from $X$ to $Y$ is uniformly bounded and
$\sup_{t\in \mathscr{T}} \n T\n_{\calL(X;Y)} \le
C$, the constant appearing in the above definition. 
In the setting of Hilbert spaces
both notions are equivalent and the above inequality holds with
$C=\sup_{t\in \mathscr{T}} \n T\n_{\calL(X;Y)}$.

$\gamma$-Boundedness is the Gaussian
analogue of $R$-boundedness, obtained by replacing Gaussian variables
by Rademacher variables. This notion was introduced and thoroughly studied
in the seminal paper \cite{CPSW}.

\subsection{$\gamma$-Radonifying operators}\label{subsec:radonif}

Let $H$ be a Hilbert space with inner product
$\iprod{\cdot}{\cdot}$ and $X$ a Banach
space. Let $H\otimes X$ denote the linear space
of all finite rank operators from $H$ to $X$.
Every element in $H \otimes X$ can be
represented in the form $\sum_{n=1}^N h_n\otimes
x_n$, where $h_n\otimes x_n$ is the rank one
operator mapping the vector $h\in H$ to
$\iprod{h}{h_n}x_n \in X$. By a Gram-Schmidt
orthogonalisation argument we may always assume that
the sequence  $\{h_n\}_{n=1}^N$ is orthonormal in
$H$.

\begin{definition}\label{def-g-rad}
The Banach space $\g( H ,X)$ is the
completion of $ H \otimes X$ with respect to the
norm
\begin{equation}\label{def-g-rad-eq1}
\Big\n \sum_{n=1}^N h_n\otimes
x_n\Big\n_{\g(H,X)} :=  \Big(\mE
\Big\n \sum_{n=1}^N \g_n x_n\Big\n^2\Big)^{1/2},
\end{equation}
where  $\{h_n\}_{n=1}^N$ is orthonormal in $H$
and $\{\gamma_n\}_{n= 1}^N$ is a Gaussian
sequence.
\end{definition}

Since the distribution of a Gaussian vector in
$\dbR^N$ is invariant under orthogonal
transformations, the quantity on the right-hand
side of \eqref{def-g-rad-eq1} is independent of
the representation of the operator as a finite sum of the 
form $\sum_{n=1}^N h_n\otimes x_n$ as long as
$\{h_n\}_{n=1}^N$ is orthonormal in $H$.
Therefore, the norm $\n\cdot\n_{\gamma(H,X)}$ is
well defined.

\begin{remark}
 By the Kahane-Khintchine inequalities \cite[Theorem 6.2.6]{HNVW2}, for all $0<p<\infty$ there exists
 a universal constant $\kappa_p$, depending only on $p$, such that for all
 Banach spaces $X$ and all finite sequences $\{x_n\}_{n=1}^N$ in $X$ we have
 $$ \frac1{\kappa_p} \Big(\mE
\Big\n \sum_{n=1}^N \g_n x_n\Big\n^p\Big)^{1/p}\le \Big(\mE
\Big\n \sum_{n=1}^N \g_n x_n\Big\n^2\Big)^{1/2}\le \kappa_p\Big(\mE
\Big\n \sum_{n=1}^N \g_n x_n\Big\n^p\Big)^{1/p}.
$$
As a consequence, for $1\le p<\infty$ the norm
\begin{equation*}
\Big\n \sum_{n=1}^N h_n\otimes
x_n\Big\n_{\g^p(H,X)} :=  \Big(\mE
\Big\n \sum_{n=1}^N \g_n x_n\Big\n^p\Big)^{1/p},
\end{equation*}
with $\{h_n\}_{n=1}^N$ orthonormal in $H$, is an equivalent norm on $\gamma(H,X)$.
Endowed with this equivalent norm, the space is denoted by $\gamma^p(H,X)$.
\end{remark}

For any Hilbert space $H$ we have a natural
isometric isomorphism $$\gamma(H,X)
=\calL_2(H,X),$$ where $\calL_2(H,X)$ is the space
of all Hilbert-Schmidt operators from $H$ to
$X$. Furthermore, for $1\le p<\infty$ and $\sigma$-finite measures $\mu$ we have an isometric
isomorphism of Banach spaces
\begin{equation}\label{eq-gammaLpX}
\gamma^p(H, L^p(\mu;X)) \simeq
L^p(\mu;\gamma^p(H;X))
\end{equation} which is
obtained by associating with $f\in
L^p(\mu;\gamma(H;X))$ the mapping $h'\mapsto
f(\cdot)h'$ from $H$ to $L^p(\mu;X)$ \cite[Theorem 9.4.8]{HNVW2}.  In
particular, upon identifying $\gamma(H,\R)$
 with $H$, we obtain an
isomorphism of Banach spaces
\begin{equation*}
\gamma(H, L^p(\mu)) \simeq L^p(\mu;H).
\end{equation*}

When $I$ is an interval in the real line, for brevity we write $$\gamma(I;X) := \gamma(L^2(I),X).$$ 

\begin{definition}\label{def:belong}
A strongly measurable function $f:I\to X$ is said to {\em define 
an element of $\gamma(I;X)$}
if $\lb f,x\s\rb \in L^2(I)$ for all $x\s\in X\s$ and the Pettis integral operator
$$g \mapsto \int_I f(t)g(t)\ud t$$
belongs to $\g(I;X)$. 
\end{definition}

Observe that the condition $\lb f,x\s\rb \in L^2(I)$ for all $x\s\in X\s$ ensures
that $fg$ is Pettis integrable for all $g\in L^2(I)$; see \cite[Definition 9.2.3]{HNVW2} and the discussion
following it.

Throughout the paper we fix a final time $0<T<\infty$. For any $f\in \ggX$ it is possible to define a $\frac12$-H\"older continuous function 
$[0,T]\ni t\mapsto \int_0^t f(s)\ud s \in X$ as follows.
We begin by observing that integration operator $I_{s,t}:\phi\mapsto \int_s^t f(r)\ud r$ is bounded from $L^2(0,T)$ to $\R$
and has norm $(t-s)^{1/2}$. Therefore, by the Kalton--Weis extension theorem (\cite[Theorem 9.6.1]{HNVW2})
the mapping $\wt I_{s,t}: \phi\otimes x \mapsto (I_{s,t}\phi)\otimes x$ has a 
unique extension to a bounded linear operator from
$\gamma(0,T;X)$ to $X$ of the same norm: $\n \wt I_{s,t}\n_{\calL(\gamma(0,T;X),X)} = \n I_{s,t}\n_{\calL(L^2(0,T),\R)} = (t-s)^{1/2}.$
We now define, for $g\in \ggX$,
$$ \int_s^t f(s)\ud s := \wt I_{s,t} f .$$
Noting that $\wt I_{0,t} f - \wt I_{0,s} f = \wt I_{s,t} f$, we see that $t\mapsto \int_0^t f(s)\ud s$ is H\"older continuous of order $\frac12$ and
\begin{align}\label{eq:est-t} \Big\n \int_s^t  f(s)\ud s\Big\n \le (t-s)^{1/2} \n f\n_{\ggX}.
\end{align}

\begin{remark}\label{rem:abuse}
We are abusing notation slightly here, as the above integral notation is only formal since elements in $\ggX$ cannot in general be represented as functions. 
For the sake of readability this notation will be used throughout the paper.
\end{remark}
Treating $t$ as a variable, we may also use the Kalton--Weis extension theorem
to extend $f\mapsto \int_0^{\cdot} f(s)\ud s$ (viewed as a bounded operator on $L^2(0,T)$
of norm $T/\sqrt{2}$)
to a bounded operator on $\ggX$ of the same norm. With the same slight abuse of notation this may be expressed as
\begin{align*}
\Big\n t\mapsto \int_0^t  f(s)\ud s\Big\n_{\ggX} \le \frac{T}{\sqrt{2}} \n f\n_{\ggX}.
\end{align*}

We will need the following elaboration on this theme, which is of some independent interest.
Put $$\Delta := \{(s,t)\in (0,T)\times (0,T):\, 0< s \le t< T\}.$$

\begin{lemma}\label{lem:gammaDeterm}
Let $X$ and $Y$ be Banach spaces and 
assume that $Y$ does not contain a closed subspaces isomorphic to $c_0$.
\begin{enumerate}
 \item[\rm(1)] Let $M:(0,T)\to \calL(X,Y)$ be a function with the property that $t\mapsto M(t)x$ 
is strongly measurable for all $x\in X$ and assume that $M$ has $\gamma$-bounded range, with $\gamma$-bound $\gamma(M)$.
Then the function
$$ \Phi f: t\mapsto \int_0^t M(t-s)f(s)\ud s, \quad f\in L^2(0,T)\otimes X,$$
defines an element of $\gamma(0,T;Y)$ of norm
$$ \n \Phi f\n_{\gamma(0,T;Y)} \le T \gamma(M)\n f\n_{\gamma(0,T;X)}.$$
\item[\rm(2)]  Let $M:\Delta\to \calL(X,Y)$ be a function with the property that $(s,t)\mapsto M(s,t)x$ 
is strongly measurable for all $x\in X$ and assume that $M$ has $\gamma$-bounded range, with $\gamma$-bound $\gamma(M)$. The function
$$ \Phi f: t\mapsto \int_0^t M(s,t)f(s,t)\ud s, \quad f\in L^2(\Delta)\otimes X,$$
defines an element of $\gamma(0,T;Y)$ of norm
$$ \n \Phi f\n_{\gamma(0,T;Y)} \le T^{1/2} \gamma(M)\n f\n_{\gamma(\Delta;X)}.$$
\end{enumerate}
As a consequence, the mappings $f\mapsto \Phi f$ extend uniquely to bounded operators from
$\gamma(0,T;X)$ to $\gamma(0,T;Y)$ and from $\gamma(\Delta;X)$ to $\gamma(0,T;Y)$, respectively,  of norms at most $T\gamma(M)$ and $T^{1/2}\gamma(M)$, respectively. 
\end{lemma}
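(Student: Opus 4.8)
The plan is to reduce part~(1) to part~(2), and to prove part~(2) by factoring $\Phi$ as pointwise multiplication by $M$ --- handled by the $\gamma$-multiplier theorem --- followed by integration of the first variable up to the diagonal --- handled by the ideal property of $\gamma$-spaces, i.e.\ by the Kalton--Weis extension theorem already used above. The hypothesis that $Y$ contains no closed subspace isomorphic to $c_0$ enters exactly once: it is what makes the $\gamma$-multiplier theorem deliver a genuine element of $\gamma(\Delta;Y)$ in the sense of Definition~\ref{def:belong}.

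\emph{Reduction of (1) to (2).} Given $M$ as in~(1), set $\wt M(s,t):=M(t-s)$ on $\Delta$; then $(s,t)\mapsto\wt M(s,t)x$ is strongly measurable for each $x$ (because $M(\cdot)x$ is, and null sets pull back to null sets under $(s,t)\mapsto t-s$), and $\gamma(\wt M)=\gamma(M)$ since $\{\wt M(s,t):(s,t)\in\Delta\}=\{M(r):r\in(0,T)\}$. For $f\in L^2(0,T)\otimes X$ put $\wt f(s,t):=f(s)\in L^2(\Delta)\otimes X$; its Pettis operator factors as $I_f\circ P$, where $P\in\calL(L^2(\Delta),L^2(0,T))$ is $(P\psi)(s):=\int_s^T\psi(s,t)\ud t$, of norm $\le T^{1/2}$ by Cauchy--Schwarz, so the ideal property gives $\n\wt f\n_{\gamma(\Delta;X)}\le T^{1/2}\n f\n_{\gamma(0,T;X)}$. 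Since the operator of part~(1) applied to $f$ equals the operator of part~(2) built from $\wt M$ applied to $\wt f$, part~(2) yields the part~(1) estimate $\n\Phi f\n_{\gamma(0,T;Y)}\le T^{1/2}\gamma(M)\n\wt f\n_{\gamma(\Delta;X)}\le T\gamma(M)\n f\n_{\gamma(0,T;X)}$.

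\emph{Proof of (2) and the consequence.} Fix $f\in L^2(\Delta)\otimes X$. By the $\gamma$-multiplier theorem \cite{HNVW2}, applied to the $\gamma$-bounded operator-valued function $M$ --- the one place where ``$Y$ has no copy of $c_0$'' is used, to ensure the product still defines an element of $\gamma(\Delta;Y)$ --- the function $Mf:(s,t)\mapsto M(s,t)f(s,t)$ defines an element of $\gamma(\Delta;Y)$ with $\n Mf\n_{\gamma(\Delta;Y)}\le\gamma(M)\n f\n_{\gamma(\Delta;X)}$. The operator $\rho\in\calL(L^2(\Delta),L^2(0,T))$, $(\rho\psi)(t):=\int_0^t\psi(s,t)\ud s$, has norm $\le T^{1/2}$ by Cauchy--Schwarz, so by the Kalton--Weis extension theorem it induces a bounded operator $\wt\rho:\gamma(\Delta;Y)\to\gamma(0,T;Y)$ of norm $\le T^{1/2}$. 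It remains to show that $\wt\rho(Mf)$ is the element of $\gamma(0,T;Y)$ defined by the function $\Phi f:t\mapsto\int_0^t M(s,t)f(s,t)\ud s$ (a genuine Bochner integral for a.e.\ $t$, as $Mf$ is Bochner integrable on the finite-measure set $\Delta$): strong measurability of $\Phi f$ and $\lb\Phi f,y\s\rb\in L^2(0,T)$ for $y\s\in Y\s$ follow from Fubini's theorem for $Mf$ (extended by zero off $\Delta$) together with $\lb Mf,y\s\rb\in L^2(\Delta)$, while for $g\in L^2(0,T)$ a further application of Fubini gives
$$
\int_0^T\Big(\int_0^t M(s,t)f(s,t)\ud s\Big)g(t)\ud t=\int_\Delta M(s,t)f(s,t)g(t)\ud s\ud t=\wt\rho(Mf)(g),
$$
so the Pettis operator of $\Phi f$ is $\wt\rho(Mf)$, whence $\n\Phi f\n_{\gamma(0,T;Y)}=\n\wt\rho(Mf)\n\le T^{1/2}\gamma(M)\n f\n_{\gamma(\Delta;X)}$. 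Finally, since $L^2(0,T)\otimes X$ and $L^2(\Delta)\otimes X$ are dense in $\gamma(0,T;X)$ and $\gamma(\Delta;X)$ respectively and $\Phi$ is linear, the two bounds give the asserted unique bounded extensions.

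\emph{Main obstacle.} The estimates themselves are soft; the real work lies in the bookkeeping imposed by Definition~\ref{def:belong} --- checking that the \emph{honestly defined} function $\Phi f$ defines an element of $\gamma(0,T;Y)$ which coincides with the abstractly built operator $\wt\rho(Mf)$ (the content of the displayed Fubini identity) and, one level up, that $Mf$ defines an element of $\gamma(\Delta;Y)$, which is exactly where the $\gamma$-multiplier theorem and the no-$c_0$ hypothesis on $Y$ are invoked. The remaining measure-theoretic points (Fubini over the triangle $\Delta$, integrability of the products $M(s,t)f(s,t)g(t)$) are routine because $\Delta$ has finite measure and $M$ has uniformly bounded range.
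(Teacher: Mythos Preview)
Your proof is correct and follows essentially the same approach as the paper: both factor $\Phi$ through $\gamma(\Delta;\,\cdot\,)$ using the Kalton--Weis multiplier theorem for the pointwise action of $M$ and the Kalton--Weis extension theorem for the scalar integration operator, the only difference being organisational (you reduce (1) to (2) by shifting the convolution variable into $\wt M$, whereas the paper proves (1) directly via a three-step factorisation $J_2\circ M\circ J_1$ and then obtains (2) by omitting $J_1$). Your additional care in verifying that the abstractly constructed element $\wt\rho(Mf)$ coincides with the Pettis operator of the concrete function $\Phi f$ in the sense of Definition~\ref{def:belong} is a point the paper leaves implicit.
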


\begin{proof} We begin with the proof of (1). 
The estimate
$$ \int_0^T \int_0^t |g(t-s)|^2\ud s\ud t \le T \n g\n_2^2$$
shows that the
mapping $J_1: g \mapsto [(s,t)\mapsto g(t-s)]$ is bounded from $L^2(0,T)$ to $L^2(\Delta_T)$ of norm at most $T^{1/2}.$
By the Kalton--Weis extension theorem, it extends to a bounded operator from
$\gamma(0,T;X)$ to $\gamma(\Delta;X)$ of the same norm. By the Kalton--Weis multiplier theorem (\cite[Theorem 9.5.1]{HNVW2}),
the pointwise multiplier $M$ (acting in the variable $s$, so that $[(s,t)\mapsto g(t-s)]$ is mapped to 
$[(s,t)\mapsto M(s)g(t-s)]$) extends to a bounded operator from $\gamma(\Delta;X)$
to $\gamma(\Delta;Y)$ of norm at most $\gamma(M)$. 
Next, the estimate 
$$\int_0^T \Big|\int_0^t f(s,t)\ud s\Big|^2\ud t
\le T\int_0^T \int_0^t |h(s,t)|^2\ud s\ud t
$$
shows that the mapping $J_2: h\mapsto [t\mapsto \int_0^t h(s,t)\ud s]$ is bounded from $L^2(\Delta_T)$
to $L^2(0,T)$ of norm at most $T^{1/2}$. 
By the Kalton--Weis extension theorem, it extends to a bounded operator from
$\gamma(\Delta;Y)$ to $\gamma(0,T;Y)$ of the same norm.
The mapping $f\mapsto \Phi f$ in the statement of the lemma factorises as $\Phi = J_2\circ M \circ J_1$ and therefore extends 
to a bounded operator from $\gamma(0,T;X)$ to $\gamma(0,T;Y)$ of norm at most $T\gamma(M)$. 

\smallskip
(2): \ This is proved similarly, except that the first step of the proof can now be skipped.
\end{proof}

\subsection{UMD spaces and the upper contraction property}

We next introduce the class of Banach spaces in which we 
will be working.

\begin{definition}\label{def-UMD}
A Banach space $X$ is called a {\em UMD space}
if for some (equivalently,
for all) $1<p<\infty$ there is a constant
$C_{p,X}\ge 0$ such that for all finite $X$-valued
$L^p$-martingale difference sequences
$\{d_n\}_{n=1}^N$ on a probability space $\Omega$ and sequences of signs
$\{\epsilon_n\}_{n=1}^N$ one has
\begin{equation*}
\mE \Big\n \sum_{n=1}^N \e_n d_n\Big\n^p \le C_{p,X}^p
\mE \Big\n\sum_{n=1}^N d_n\Big\n^p, \quad \forall
N\geq 1.
\end{equation*}
\end{definition}

Every Hilbert space and every space $L^p(\mu)$ with $1<p<\infty$ is a UMD space. 
If $X$ is a UMD space, then the spaces
$L^p(\mu;X)$ are UMD for all $1<p<\infty$.  Moreover,
$X$ is a UMD space if and only $X\s$ is a UMD space.
Every UMD space is reflexive (and in fact super-reflexive); it follows that spaces such as
$c_0$, $C(K)$, $\ell^\infty$, $L^\infty(\mu)$, $\ell^1$, $L^1(\mu)$, and all Banach spaces containing isomorphic copies of one of these spaces, fail the UMD property (apart from the trivial cases giving rise to finite-dimensional spaces, i.e., when $K$ is finite or $\mu$ is supported on finitely many atoms).

\begin{definition}\label{def:triangular}
A Banach space $X$ has the {\em upper contraction property} if for some (equivalently, for all) $1\le p<\infty$ 
there is a constant $ C_{p,X}\ge 0$ such that for
all finite sequences  $\{x_{mn}\}_{m,n=1}^{M,N}$ in $X$ and all Gaussian sequences  $\{\gamma_m'\}_{m=1}^M$ and
$\{\gamma_n''\}_{n=1}^N$ on independent probability spaces $\Om'$ and $\Om''$ and 
$\{\gamma_{m,n}\}_{m,n=1}^{M,N}$ on a probability space $\Om$,
we have
\[ \E\Bigl\| \sum_{m=1}^M \sum_{n=1}^N \gamma_{mn} x_{mn} \Bigr\|^p \le C_{p,X}^p
 \E'\E''\Bigl\| \sum_{m=1}^M \sum_{n=1}^N \gamma_m' \gamma_n'' x_{mn}
\Bigr\|^p .\]
\end{definition}
By interchanging the two double sums one obtains the related {\em lower contraction property},
and a Banach space is said to have the {\em Pisier contraction property} if it has both the upper and lower
contraction property. In the present paper we only need the upper contraction property. 

Every Hilbert space and every Banach lattice with finite cotype
(in particular, every space $L^p(\mu)$ with $1\le p<\infty$) 
has the Pisier contraction property.  If $X$ has the upper (resp. lower, Pisier) contraction property, then the spaces
$L^p(\mu;X)$ have the upper (resp. lower, Pisier) contraction property for all $1\le p<\infty$. Moreover, if $X$ is $K$-convex, then
$X$ has the upper (resp. lower, Pisier) contraction property if and only $X\s$ has the lower (resp. upper, Pisier) contraction property. Every Banach space with type $2$ has the upper contraction property. The reader is referred to \cite[Section 7.6]{HNVW2} for proofs and more details. 

The following lemma translates the above definition into the language of $\gamma$-radonification. A proof is 
obtained by noting that for functions in $L^2(0,T)\otimes L^2(0,T)\otimes X$ the lemma follows from the 
estimate of the definition, and the general case follows from it by approximation.

\begin{lemma}\label{lem:triangular} 
If $X$ is a Banach space with the upper contraction property, then for all $f\in L^2(0,T)\otimes L^2(0,T)\otimes X$ we have
$$ \n f\n_{\gamma((0,T)\times(0,T);X)} \le C_{p,X} \n f\n_{\gamma(0,T; \gamma(0,T;X))}.$$
\end{lemma}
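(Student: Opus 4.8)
The plan is to unfold both sides of the inequality into Gaussian averages and then invoke Definition~\ref{def:triangular} essentially verbatim; there is no analysis involved, only bookkeeping of finite orthonormal systems. First I would fix $1\le p<\infty$ and work throughout with the equivalent norm $\gamma^p$ in place of $\gamma=\gamma^2$, so that the constant $C_{p,X}$ of the upper contraction property appears without stray Kahane--Khintchine factors (taking $p=2$ recovers the original $\gamma=\gamma^2$ norms with the constant $C_{2,X}$). Next, applying Gram--Schmidt successively in the two tensor slots, I would write the given $f\in L^2(0,T)\otimes L^2(0,T)\otimes X$ as
$$ f=\sum_{m=1}^M\sum_{n=1}^N a_m\otimes b_n\otimes x_{mn}, $$
with $\{a_m\}_{m=1}^M$ and $\{b_n\}_{n=1}^N$ orthonormal in $L^2(0,T)$ and $x_{mn}\in X$.

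For the left-hand side I would use the isometric identification $L^2((0,T)\times(0,T))\simeq L^2(0,T)\otimes_2 L^2(0,T)$, under which $\{a_m\otimes b_n\}_{m,n}$ is orthonormal; thus $f$ is the finite-rank operator $\sum_{m,n}(a_m\otimes b_n)\otimes x_{mn}$, and Definition~\ref{def-g-rad} gives
$$ \n f\n_{\gamma^p((0,T)\times(0,T);X)}=\Big(\E\Big\n\sum_{m,n}\gamma_{mn}x_{mn}\Big\n^p\Big)^{1/p} $$
for a (doubly indexed) Gaussian sequence $\{\gamma_{mn}\}$. For the right-hand side I would instead view $f$ as the finite-rank operator $\sum_{m=1}^M a_m\otimes y_m$ from $L^2(0,T)$ to $Y:=\gamma^p(0,T;X)$, where $y_m:=\sum_{n=1}^N b_n\otimes x_{mn}\in L^2(0,T)\otimes X\subseteq Y$; since $\{a_m\}$ is orthonormal, Definition~\ref{def-g-rad} applied with the Banach space $Y$ gives $\n f\n_{\gamma^p(0,T;Y)}=(\E'\n\sum_m\gamma_m' y_m\n_Y^p)^{1/p}$. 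Now for each fixed $\omega'$ the element $\sum_m\gamma_m'(\omega')y_m=\sum_n b_n\otimes\big(\sum_m\gamma_m'(\omega')x_{mn}\big)$ is again a finite-rank operator with $\{b_n\}$ orthonormal, so a second application of Definition~\ref{def-g-rad} identifies $\n\sum_m\gamma_m'(\omega')y_m\n_Y^p$ with $\E''\n\sum_n\gamma_n''\sum_m\gamma_m'(\omega')x_{mn}\n^p$; integrating over $\omega'$ and applying Fubini yields
$$ \n f\n_{\gamma^p(0,T;Y)}=\Big(\E'\E''\Big\n\sum_{m,n}\gamma_m'\gamma_n'' x_{mn}\Big\n^p\Big)^{1/p}. $$

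At this point Definition~\ref{def:triangular}, applied to the array $\{x_{mn}\}_{m,n}$, says precisely that $\E\n\sum_{m,n}\gamma_{mn}x_{mn}\n^p\le C_{p,X}^p\,\E'\E''\n\sum_{m,n}\gamma_m'\gamma_n'' x_{mn}\n^p$. Taking $p$-th roots and combining the two displays above gives $\n f\n_{\gamma^p((0,T)\times(0,T);X)}\le C_{p,X}\n f\n_{\gamma^p(0,T;\gamma^p(0,T;X))}$, which is the assertion (with $p=2$ for the $\gamma$-norms as written, and for any $p$ with the $\gamma^p$-norms). As the sentence before the statement remarks, this inequality on the algebraic tensor product moreover extends by density to all $f$ in the completion $\gamma(0,T;\gamma(0,T;X))$, giving a continuous inclusion into $\gamma((0,T)\times(0,T);X)$, but that is not needed for the lemma as phrased.

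The only step that needs genuine care---and the one I would expect to be the main (mild) obstacle---is the passage to the iterated norm on the right-hand side: one must freeze $\omega'$ before computing the inner $\gamma^p(0,T;X)$-norm, so that $\sum_m\gamma_m'(\omega')x_{mn}$ is a genuine deterministic vector of $X$ and Definition~\ref{def-g-rad} is legitimately applicable with the independent inner Gaussian sequence $\{\gamma_n''\}$; Fubini then cleanly separates the $\omega'$- and $\omega''$-averages. The freedom to choose the two orthonormal systems $\{a_m\}$ and $\{b_n\}$ independently is used implicitly, and is justified by the fact that orthonormality factorises through the Hilbert tensor product $L^2(0,T)\otimes_2 L^2(0,T)$.
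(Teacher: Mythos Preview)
Your proposal is correct and follows exactly the approach the paper indicates: the paper's proof consists of the single sentence that ``for functions in $L^2(0,T)\otimes L^2(0,T)\otimes X$ the lemma follows from the estimate of the definition,'' and you have simply spelled out this unfolding of the two $\gamma$-norms into Gaussian averages via orthonormal systems and then applied Definition~\ref{def:triangular} verbatim. Your careful handling of the iterated norm (freezing $\omega'$ before computing the inner $\gamma$-norm) and your remark about the constant $C_{p,X}$ versus $C_{2,X}$ are both correct clarifications of details the paper leaves implicit.
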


\subsection{Stochastic integration}

Let $\F= (\calF_t)_{t\in [0,T]}$ be a filtration in $\Omega$. An $X$-valued {\em $\calF$-adapted step process}
is a finite linear combination of indicator processes of the form $\one_{(s,t)\times F}\otimes x$
with $F\in \calF_s$ and $x\in X$.
The space $$L_\F^p(\Om;\ggX)$$ is defined as the closure  in $L^p(\Om;\ggX)$ of the
$X$-valued $\calF$-adapted step processes.
The following result is from \cite{NVW07}.

\begin{lemma}\label{lem:adapted}
 If the process $\phi:[0,T]\times \Om\to X$ is $\F$-adapted and defines an element of
 $L^p(\Om;\ggX)$, then it defines an element of $L_\F^p(\Om;\ggX)$.
\end{lemma}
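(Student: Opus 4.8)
The plan is to show that an $\F$-adapted process $\phi$ which defines an element of $L^p(\Om;\ggX)$ can be approximated in $L^p(\Om;\ggX)$ by $\F$-adapted step processes, since $L_\F^p(\Om;\ggX)$ is by definition the closure of such step processes. The key point is that adaptedness is preserved under a suitable approximation scheme. I would proceed by first reducing to a convenient dense class, then discretising in time in a way that respects the filtration.

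First I would recall that, by definition of $\gamma(0,T;X)$ and the density of $L^2(0,T)\otimes X$ in it, together with the isomorphism $\gamma^p(H,L^p(\mu;X))\simeq L^p(\mu;\gamma^p(H;X))$ from \eqref{eq-gammaLpX} applied with $H=L^2(0,T)$ and $\mu$ the measure on $\Om$, simple processes of the form $\sum_k \one_{A_k}\otimes g_k$ with $A_k\subseteq\Om$ measurable and $g_k\in L^2(0,T)\otimes X$ are dense in $L^p(\Om;\ggX)$. The subtlety is that such approximants need not be adapted. To fix adaptedness, I would use a dyadic (or more generally mesh-refining) partition $0=t_0^n<t_1^n<\dots<t_{N_n}^n=T$ and replace $\phi$ by a process that on each subinterval $(t_{j-1}^n,t_j^n)$ depends only on information available at the left endpoint. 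Concretely, the natural candidate is a conditional-expectation/averaging operator that projects onto processes constant on each subinterval and $\calF_{t_{j-1}^n}$-measurable there.

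The main step is therefore to construct, for each $n$, a bounded linear operator $\Pi_n$ on $L^p(\Om;\ggX)$ that maps $\F$-adapted processes to $\F$-adapted step processes and converges strongly to the identity as $n\to\infty$, at least on the dense subspace identified above. A clean way to build $\Pi_n$ is as a composition: first average in time over each subinterval of the partition (this is a contraction on $\gamma(0,T;X)$ since it is the $\gamma$-extension, via the Kalton--Weis extension theorem, of the conditional expectation projection on $L^2(0,T)$ onto functions constant on the partition), and then apply the conditional expectation $\E(\cdot\mid\calF_{t_{j-1}^n})$ on the $j$-th block. Because $X$ is UMD (hence reflexive and the conditional expectations are uniformly bounded on $L^p(\Om;\gamma(0,T;X))$, the latter space again being UMD), these operators are uniformly bounded in $n$. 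Adaptedness of $\Pi_n\phi$ follows from the construction: on $(t_{j-1}^n,t_j^n)$ the value is $\calF_{t_{j-1}^n}$-measurable, hence $\calF_t$-measurable for $t$ in that interval, so $\Pi_n\phi$ is an adapted step process lying in $L_\F^p(\Om;\ggX)$.

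The hard part will be proving strong convergence $\Pi_n\phi\to\phi$ in $L^p(\Om;\ggX)$ and verifying the uniform boundedness of $\Pi_n$, because the time-averaging operator acts through the $\gamma$-norm rather than pointwise, so one cannot argue componentwise. For convergence I would first establish it on the dense subspace of simple tensors $\one_A\otimes g$ with $g\in L^2(0,T)\otimes X$ and $A\in\calF_{s}$ for some $s$; here the time-averaging part converges by the martingale convergence theorem for the partition-projections on $L^2(0,T)$ lifted via the $\gamma$-functional calculus, and the conditional expectation part converges by the $\F$-martingale convergence theorem in $L^p(\Om;\gamma(0,T;X))$ (valid since this space is UMD, hence has the Radon--Nikod\'ym property and reflexivity needed for vector-valued martingale convergence). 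The uniform boundedness then upgrades convergence from the dense subspace to all of $L_\F$-approximable $\phi$ by a standard $3\e$ argument. This is essentially the argument of \cite{NVW07}; the role of the UMD hypothesis is precisely to guarantee the boundedness and convergence of these conditional expectation operators in the relevant vector-valued $L^p$ setting.
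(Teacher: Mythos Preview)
The paper does not give its own proof of this lemma but cites \cite{NVW07}, so the comparison is with the argument there. Your overall architecture---build uniformly bounded projections $\Pi_n$ onto adapted step processes and prove strong convergence---is the right shape, and the time-averaging component $A_n$ (the $\gamma$-extension of the $L^2(0,T)$ conditional expectation onto piecewise constants) is handled correctly. The gap is in the convergence step for your choice $\Pi_n=E_nA_n$, where $E_n$ applies $\E(\cdot\mid\calF_{t_{j-1}^n})$ on the $j$-th block.

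You claim $\Pi_n(\one_A\otimes g)\to\one_A\otimes g$ for $A\in\calF_s$ and $g\in L^2(0,T)\otimes X$. This is false: take $A\in\calF_s\setminus\calF_0$ and let $g$ be supported in $[0,s/2]$. On each subinterval $I_j\subseteq[0,s/2]$ the operator $E_n$ replaces $\one_A$ by $\E(\one_A\mid\calF_{t_{j-1}^n})$, and as the mesh shrinks these do \emph{not} converge to $\one_A$ but rather trace out the martingale $t\mapsto\E(\one_A\mid\calF_t)$ on $[0,s/2]$. So $\Pi_n$ fails to converge to the identity on this (non-adapted) tensor, and your $3\varepsilon$ argument collapses: the only obvious dense subspace on which $E_nA_n$ does converge is the adapted step processes themselves, which is circular.

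The fix used in \cite{NVW07} is to drop $E_n$ and instead \emph{shift} the averaging: on the $j$-th subinterval define $\Pi_n\phi$ to be the average of $\phi$ over the $(j{-}1)$-th subinterval. For adapted $\phi$ this average is already $\calF_{t_{j-1}^n}$-measurable, so $\Pi_n\phi$ is an adapted step process with no conditional expectation in $\Om$ required. Now $\Pi_n$ is purely the Kalton--Weis extension of a uniformly bounded operator on $L^2(0,T)$ (shift composed with averaging) which converges strongly to the identity on $L^2(0,T)$; this lifts to $\gamma(0,T;X)$ and then to $L^p(\Om;\gamma(0,T;X))$ by exactly the density-plus-uniform-boundedness argument you outlined, this time legitimately, since the test class of arbitrary tensors now works.
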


From the point of view of stochastic integration, the raison d'\^etre for UMD spaces is the following result
of \cite{NVW07}.
\begin{theorem}[It\^o isomorphism]\label{thm:Itoisomorph}
Let $X$ be a UMD space and let $1<p<\infty$. For all $\F$-adapted elementary processes
$\phi\in L^p(\Omega;\gamma(0,T;X))$
we have
$$
 \E \Big\n \int_0^T \phi\ud W \Big\n^p 
  \eqsim_{p}
 \E \sup_{t\in [0,T]}\Big\n \int_0^t \phi\ud W \Big\n^p
  \eqsim_{p,X}\n\phi\n_{L^p(\Omega;\gamma(0,T;X))}^p
$$
with implied constants depending only on $p$ and $X$.
\end{theorem}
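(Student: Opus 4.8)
The plan is to establish the two equivalences separately. For the first one, observe that since $\phi$ is an $\F$-adapted elementary process the process $t\mapsto\int_0^t\phi\ud W$ is a continuous $X$-valued $\F$-martingale; Doob's maximal $L^p$-inequality for Banach space-valued martingales (valid for every $1<p<\infty$, with constant $(p/(p-1))^p$) then bounds $\E\sup_{t\in[0,T]}\n\int_0^t\phi\ud W\n^p$ by a $p$-multiple of $\E\n\int_0^T\phi\ud W\n^p$, while the reverse inequality is trivial. This gives the first $\eqsim_p$ with a constant depending only on $p$.

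The substance is the second equivalence, for which I would use decoupling. Write $\phi=\sum_{n=1}^N\one_{(t_{n-1},t_n]}\otimes\xi_n$ with $0=t_0<\cdots<t_N=T$ and each $\xi_n$ strongly $\calF_{t_{n-1}}$-measurable and in $L^p(\Om;X)$, so that $\int_0^T\phi\ud W=\sum_{n=1}^N\xi_n\Delta W_n$ with $\Delta W_n:=W(t_n)-W(t_{n-1})$. The vectors $d_n:=\xi_n\Delta W_n$ form an $X$-valued martingale difference sequence for the discrete filtration $(\calF_{t_n})_{n=0}^N$, and since each increment $\Delta W_n$ is independent of $\calF_{t_{n-1}}$ with law $N(0,t_n-t_{n-1})$, a decoupled tangent sequence is obtained by replacing $\Delta W_n$ with an independent Gaussian $\widetilde g_n\sim N(0,t_n-t_{n-1})$ living on an independent probability space $\widetilde\Om$ and independent of all the $\xi_n$. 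Because $X$ is UMD --- equivalently, $X$ has both the $\mathrm{UMD}^+$ and $\mathrm{UMD}^-$ properties --- the decoupling inequalities for martingale differences apply and yield
$$\E\,\Big\n\sum_{n=1}^N d_n\Big\n^p\eqsim_{p,X}\E\,\widetilde\E\,\Big\n\sum_{n=1}^N\xi_n\widetilde g_n\Big\n^p,$$
with constants depending only on $p$ and $X$.

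To finish, fix $\om\in\Om$ and use the Kahane--Khintchine inequalities to replace $\widetilde\E\n\sum_n\xi_n(\om)\widetilde g_n\n^p$ by the $(p/2)$-th power of $\widetilde\E\n\sum_n\xi_n(\om)\widetilde g_n\n^2$, up to the universal constant $\kappa_p$. Introducing the orthonormal system $h_n:=(t_n-t_{n-1})^{-1/2}\one_{(t_{n-1},t_n]}$ in $L^2(0,T)$, one has $\phi(\om)=\sum_{n=1}^N(t_n-t_{n-1})^{1/2}h_n\otimes\xi_n(\om)$, so that by Definition~\ref{def-g-rad}
$$\n\phi(\om)\n_{\gamma(0,T;X)}^2=\E\,\Big\n\sum_{n=1}^N(t_n-t_{n-1})^{1/2}\gamma_n\xi_n(\om)\Big\n^2=\widetilde\E\,\Big\n\sum_{n=1}^N\xi_n(\om)\widetilde g_n\Big\n^2,$$
since $\big((t_n-t_{n-1})^{1/2}\gamma_n\big)_{n}$ and $(\widetilde g_n)_{n}$ have the same joint distribution. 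Raising to the power $p/2$, integrating over $\Om$, and chaining this with the decoupling estimate gives $\E\n\int_0^T\phi\ud W\n^p\eqsim_{p,X}\E\n\phi(\cdot)\n_{\gamma(0,T;X)}^p=\n\phi\n_{L^p(\Om;\gamma(0,T;X))}^p$, as claimed. The main obstacle is the decoupling step: this is precisely where the UMD hypothesis is indispensable, and making it rigorous requires the theory of tangent martingale difference sequences and Garling's characterisation of UMD spaces through decoupling, which is the deep input on which the whole theorem rests; the full details are carried out in \cite{NVW07}.
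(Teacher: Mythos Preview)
The paper does not prove this theorem at all: it is quoted as a known result from \cite{NVW07} (``The following result is from \cite{NVW07}''), and no argument is given. Your sketch is a faithful outline of the proof in that reference --- Doob's maximal inequality for the first equivalence, then the decoupling inequalities for UMD-valued martingale difference sequences combined with Kahane--Khintchine to identify the decoupled sum with the $\gamma$-norm --- and you correctly point out that the full details are in \cite{NVW07}. So there is no discrepancy to report: you have supplied precisely the argument the paper chose to omit by citation.
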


As an immediate consequence, the stochastic integral can be extended to arbitrary integrands in
$L_\F^p(\Om;\ggX)$, with the same two-sided bound on their $L^p$-moments. 
It can furthermore be shown (see \cite{Garling}) that the UMD property is necessary in Theorem \ref{thm:Itoisomorph} in the sense that it is
implied by the validity of the statement in the theorem.

\begin{remark}
For $\phi\in L_\F^p(\Om;\ggX)$
we denote by $\int_0^T \phi\ud W$ the unique extension of the stochastic integral
as guaranteed by the theorem. For $t\in [0,T]$ we write $\int_0^t \phi\ud W:=\int_0^T \one_{(0,t)}\phi\ud W$.
\end{remark}

\section{Backward stochastic evolution equations: well-posedness}

Let us now take up our main topic, the study of the backward stochastic evolution equation (BSEE)
\begin{equation}\label{eq:BSDE}\tag{BSEE}
\begin{cases}
 \ud U(t) + AU(t)\ud t = f(t,U(t),V(t))\ud t + V(t)\ud W(t), \quad t\in [0,T], \\
  U(T) = u_T.\end{cases}
\end{equation}
The function $f$ also depends on the underlying probability space, but following common practice we suppress this from the notation.
The following standing assumptions, or, when this is explicitly indicated, a selection of them, will be in force throughout the remainder of the paper:\smallskip

\begin{itemize}
 \item [\rm(H1)] $X$ is a UMD Banach space and $1<p<\infty$; \smallskip
 \item [\rm(H2)] $\F = \{\calF_t\}_{t\in [0,T]}$ is the augmented filtration generated by the Brownian motion $W = (W(t))_{t\in [0,T]}$;\smallskip
 \item [\rm(H3)] $u_T$ belongs to  $L^p(\Om,\calF_T;X)$;\smallskip
 \item [\rm(H4)] $A$ generates a $C_0$-semigroup $S = \{S(t)\}_{t\ge 0}$ on $X$;\smallskip
 \item [\rm(H5)] the set $\{S(t)\}_{t\in [0,T]}$ is $\gamma$-bounded.\smallskip
\end{itemize}
If $X$ is isomorphic to a Hilbert space, (H5) follows from (H4).
If $X$ is a UMD space, (H4) and (H5) are fulfilled when $A$ has maximal $L^p$-regularity on $[0,T]$.
Recall that a densely defined, closed operator $A$ acting in a Banach space $X$ has {\em maximal $L^p$-regularity on $[0,T]$} if
there exist a constant $C\ge 0$ such that for every $f\in C_{\rm c}(0,T)\otimes \Dom(A)$ there exists
 a strongly measurable function $u:[0,T]\to X$ with the following properties:
 \begin{enumerate}
 \item[(i)] $u$ takes values in $\Dom(A)$ almost everywhere and $Au$ belongs to $L^p(0,T;X)$;
 \item[(ii)] for almost all $t\in (0,T)$ we have
 $$ u(t) + \int_0^t Au(s)\ud s = \int_0^t f(s)\ud s;$$
 \item[(iii)] we have the estimate $$\n Au\n_{L^p(I;X)} \le C \n f\n_{L^p(0,T;X)},$$
 with a constant $C\ge 0$ independent of $f$.
 \end{enumerate}
A systematic discussion of maximal $L^p$-regularity is given in \cite{Dore}, where among other things it is shown that if $A$ has maximal $L^p$-regularity, then
$A$ generates an (analytic) $C_0$-semigroup. In particular, maximal $L^p$-regularity implies that (H4) holds.
A celebrated result of Weis \cite{Weis} states that a densely defined closed operator $A$ in a UMD space $X$ has maximal $L^p$-regularity and only if 
$-A$ generates an analytic $C_0$-semigroup on $X$ which is $\gamma$-bounded on some sector in the complex plane containing the positive real axis. 
In particular this implies that (H5) holds.

Examples of operators with maximal $L^p$-regularity include most second-order elliptic operators on $\R^d$ or on sufficiently smooth bounded domains in $\R^d$ with 
various boundary conditions, provided the coefficients satisfy appropriate smoothness assumptions. 
For more details the reader is referred to \cite{DHP, Dore, HNVW3, KuWe, PruSim}.

Below we will consider the three special cases where 
(a) $A=0$ and the process $f:[0,T]\times\Omega\times X\times X\to X$ only depends on the first two variables, (b) the process $f:[0,T]\times\Omega\times X\times X\to X$ only depends on the first two variables, and (c) no additional restrictions are imposed.
 The precise assumptions on $f$ will depend on the case under consideration, but in each of the three cases they coincide with, 
or are special cases of, the following condition: 

\begin{enumerate}
 \item [\rm(H6)] The function $f: [0,T]\times\Omega\times X\times X\to X$ has the following properties:
 \begin{enumerate}
 \item[(i)] $f$ is jointly measurable in the first two variables and continuous in the third and fourth;
 
 \item[(ii)] for all 
 $U,V\in L_\F^p(\Omega;\gamma(0,T;X))$  the  process  $$f(\cdot,U,V): (t,\omega) \mapsto f(t,\omega,U(t,\omega),V(t,\omega))$$ defines an element of $L_{\F}^p(\Om;\gamma(0,T;X))$;
 
 \item[(iii)]
 and there is a constant $C\ge 0$  such that  for all 
 $U,V\in L_\F^p(\Omega;\gamma(0,T;X))$ we have
\begin{align*} \ \qquad\quad \n & f(\cdot,U,V)\n_{L^p(\Omega;\gamma(0,T;X))} 
\\ & \qquad\qquad \le C(1+\n U\n_{L^p(\Omega;\gamma(0,T;X))} + \n V\n_{L^p(\Omega;\gamma(0,T;X))});
\end{align*}

 \item[(iii)] there is a constant $L\ge 0$  such that for all 
 $U,U',V,V'\!\in L_\F^p(\Omega;\gamma(0,T;X))$ we have
 \begin{align*}
  \ \qquad\qquad\quad\ \ \n & f(\cdot,U,V) - f(\cdot,U',V')\n_{L^p(\Omega;\gamma(0,T;X))} 
\\ & \qquad\qquad \le L(\n U-U'\n_{L^p(\Omega;\gamma(0,T;X))} + \n V-V'\n_{L^p(\Omega;\gamma(0,T;X))}).
 \end{align*}
\end{enumerate} 
\end{enumerate}

A closely related notion of $\ga$-Lipschitz
continuity has been introduced and studied in
\cite{NVW08}. In the same way as in this
reference once shows that if $X$ has type $2$
(e.g., if $X$ is a Hilbert space or a space
$L^p(\mu)$ with $2\le p<\infty$), then the usual
linear growth and Lipschitz conditions
\begin{equation*}
\begin{aligned}
\n f(t,\om,x,y)\n & \le C_f(1+\n x\n + \n y\n), \\
\n f(t,\om,x,y) - f(t,x',y')\n & \le L_f (\n x-x'\n
+ \n y-y'\n),
\end{aligned}
\end{equation*}
imply that $f$ satisfies (H6).

\begin{definition}\label{def:Lp-solution} Assume (H1)--(H6).
A {\em mild $L^p$-solution} to the problem \eqref{eq:BSDE-intr} is a pair $(U,V)$, where $U$ and $V$ are continuous $\F$-adapted processes defining elements in $L_{\F}^p(\Om;\ggX)$
such that
$$ U(t) + \int_t^T S(s-t)f(s,U(s),V(s))\ud s + \int_t^T S(s-t)V(s)\ud W(s) = S(T-t)u_T,$$
where the identity is to be interpreted in the sense explained in Subsection \ref{subsec:radonif}.
\end{definition}

Assumptions (H5) and (H6) imply, via the Kalton--Weis multiplier theorem, that if $U,V\in L_{\F}^p(\Om;\ggX)$, then for each $t\in [0,T]$ the mappings
$s\mapsto  S(s-t)f(s,U(s),V(s))$ and $s\mapsto S(s-t)V(s)$ define elements in $L_{\F}^p(\Om;\gamma(t,T;X))$.
Therefore by \eqref{eq:est-t} the integral $$\int_t^T S(s-t)f(s,U(s),V(s))\ud s$$ 
is well defined as an element of $L^p(\Om;X)$, and by Theorem \ref{thm:Itoisomorph} the same is true for the stochastic integral
$$  \int_t^T S(s-t)V(s)\ud W(s) .$$
Thus, in hindsight, the identity in Definition \ref{def:Lp-solution} admits an interpretation in $L^p(\Om;X)$ pointwise in $t\in [0,T]$, and it is of interest to ask about time regularity of $U$.

\begin{proposition} Assume (H1)--(H6).
 If $(U,V)$ is a mild $L^p$-solution to the problem \eqref{eq:BSDE-intr}, then $U$ belongs to $C([0,T];L^p(\Om;X))$.
\end{proposition}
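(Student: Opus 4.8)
The plan is to use the mild-solution identity to represent $U$ as a sum of three explicit processes and to show that each of them defines a continuous map $[0,T]\to L^p(\Om;X)$. Concretely, put $\phi:=f(\cdot,U,V)$, which belongs to $L^p_\F(\Om;\ggX)$ by (H6) since $U,V$ do, and set $\Psi(t):=\int_t^T S(s-t)\phi(s)\ud s$ and $N(t):=\int_t^T S(s-t)V(s)\ud W(s)$, so that $U(t)=S(T-t)u_T-\Psi(t)-N(t)$. Write $M:=\sup_{r\in[0,T]}\n S(r)\n_{\calL(X)}$ and let $\gamma(S)$ be the $\gamma$-bound of $\{S(r)\}_{r\in[0,T]}$ from (H5). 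The first term is handled at once: for a.e.\ $\om$ the path $t\mapsto S(T-t)u_T(\om)$ is continuous by strong continuity of $S$, and $\n S(T-t)u_T\n_X\le M\n u_T\n_X\in L^p(\Om)$, so dominated convergence gives $t\mapsto S(T-t)u_T\in C([0,T];L^p(\Om;X))$.

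For $\Psi$ I would first reduce to step processes. As recalled just before the statement, $\eqref{eq:est-t}$ together with the Kalton--Weis multiplier theorem and (H5) yield $\n\Psi(t)\n_{L^p(\Om;X)}\le T^{1/2}\gamma(S)\n\phi\n_{L^p(\Om;\ggX)}$ uniformly in $t$; since $\phi\mapsto\Psi$ is linear and the $\F$-adapted step processes are dense in $L^p_\F(\Om;\ggX)$, it suffices to treat $\phi=\one_{(a,b)\times F}\otimes x$. For such $\phi$ one has $\Psi(t)=\one_F\,w(t)$ with $w(t):=\int_{(a,b)\cap(t,T)}S(s-t)x\ud s$, a Bochner integral in $X$; inspecting the three regimes $t\in[0,a]$, $t\in[a,b]$ (where $w(t)=\int_0^{b-t}S(r)x\ud r$) and $t\in[b,T]$ (where $w(t)=0$) shows that $t\mapsto w(t)$ is continuous into $X$, hence $t\mapsto\Psi(t)=\one_F w(t)$ is continuous into $L^p(\Om;X)$.

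For $N$ I would argue similarly but using the It\^o isomorphism. Theorem~\ref{thm:Itoisomorph} and the Kalton--Weis multiplier theorem give $\n N(t)\n_{L^p(\Om;X)}\lesssim_{p,X}\gamma(S)\n V\n_{L^p(\Om;\ggX)}$ uniformly in $t$, so by the same density argument it is enough to take $V=\one_{(a,b)\times F}\otimes x$ with $F\in\calF_a$. Pulling the $\calF_{a\vee t}$-measurable factor $\one_F$ out of the integral (the integrand vanishes on $(t,a)$ for $t<a$), one gets $N(t)=\one_F Z(t)$ with $Z(t):=\int_{(a,b)\cap(t,T)}S(s-t)x\ud W(s)$, a Wiener integral. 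For $t\in[0,a]$, $Z(t)=S(a-t)Y$ with $Y:=\int_a^b S(s-a)x\ud W(s)$ fixed, so $Z$ is continuous there by dominated convergence; for $t\in[b,T]$, $Z(t)=0$; and for $t_0\le t$ in $[a,b]$ one writes $Z(t)-Z(t_0)=\int_t^b[S(s-t)-S(s-t_0)]x\ud W(s)-\int_{t_0}^t S(s-t_0)x\ud W(s)$ and applies the It\^o isomorphism and the multiplier theorem to obtain, with $h:=t-t_0$,
$$\n Z(t)-Z(t_0)\n_{L^p(\Om;X)}\lesssim_{p,X}\gamma(S)h^{1/2}\n x\n+\n [S(\cdot+h)-S(\cdot)]x\n_{\gamma(0,b-t;X)}.$$
The last $\gamma$-norm coincides with that of $\sigma_h g-g$ over $(0,b-t)$---note $b-t\le T-h$ in the relevant range---where $g\in\ggX$ is the element $r\mapsto S(r)x$ (well defined by (H5) and the Kalton--Weis multiplier theorem, applied to the constant function $x$) and $\sigma_h$ denotes the truncated left translation $(\sigma_h\psi)(r):=\psi(r+h)\one_{(0,T-h)}(r)$ on $\ggX$. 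Since $(\sigma_h)_{h\ge0}$ is a strongly continuous contraction semigroup on $\ggX$---this is elementary: for a finite-rank operator $\sum_k g_k\otimes x_k$ it reduces to $\n\sigma_h g_k-g_k\n_{L^2(0,T)}\to0$, and the general case follows by density of the finite-rank operators in $\ggX$ and $\sup_h\n\sigma_h\n\le1$---the displayed right-hand side tends to $0$ as $h\to0$. Hence $Z$ is continuous on $[a,b]$; since the three regimes agree at $t=a$ and $t=b$, the process $t\mapsto Z(t)$, and with it $N(t)=\one_F Z(t)$, is continuous into $L^p(\Om;X)$, and adding the three terms completes the argument.

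The main obstacle is the stochastic term $N$. The naive approach---writing $N(t_0)=\int_{t_0}^t S(s-t_0)V\ud W+S(t-t_0)N(t)$ and estimating $\n N(t)-N(t_0)\n\le o(1)+\n [S(t-t_0)-I]N(t)\n$---is circular: $\n S(h)-I\n_{\calL(X)}$ need not tend to $0$, and $N(t)$ is the moving point, so one only gets $\ell\le(1+M)\ell$ for $\ell:=\limsup_{t\downarrow t_0}\n N(t)-N(t_0)\n$. The point of descending to step processes is that, after the It\^o isomorphism, the increment lives in a $\gamma$-norm whose integrand carries only the \emph{fixed} vector $x$; there the estimate becomes the $t_0$-independent translation-continuity statement $\n\sigma_h g-g\n_{\ggX}\to0$, and the (elementary) strong continuity of $(\sigma_h)$ on $\ggX$ is the only ingredient one needs beyond what is already available in Section~\ref{sec:preliminaries}.
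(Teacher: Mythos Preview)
Your proof is correct and follows essentially the same approach as the paper's: decompose $U$ into the three terms, establish a uniform-in-$t$ bound for each of the two integral terms, and reduce by density to step processes (respectively, to $L^2(0,T)\otimes X$, which the paper uses for the deterministic integral to get the slightly stronger conclusion $\Psi\in L^p(\Om;C([0,T];X))$). The paper simply asserts without detail that the stochastic convolution is continuous for step-process $V$; your translation-continuity argument $\n\sigma_h g-g\n_{\ggX}\to0$ is a clean way to fill in precisely the detail the paper omits.
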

\begin{proof}
It is not hard to see 
that $t\mapsto \int_t^T S(s-t)f(s,U(s),V(s))\ud s$ belongs to $L^p(\Om;C([0,T];X))$ (and hence to $C([0,T];L^p(\Om;X))$). Indeed, arguing pathwise, it suffices to note that 
for all $g$ in the dense subspace $L^2(0,T)\otimes X$ of 
$\gamma(0,T;X)$ the mapping $t\mapsto \int_t^T S(s-t)g(s)\ud s$ is continuous 
and satisfies
\begin{align*}
\ & \sup_{t\in [0,T]} \Big\n \int_t^T S(s-t)g(s)\ud s\Big\n \\ & \qquad \le\sup_{t\in [0,T]} (T-t)^{1/2} \n s\mapsto S(t-s)g(s)\n_{\gamma(T-t,T;X)}
\le T^{1/2}\gamma(S)\n g\n_{\gamma(0,T;X)}
\end{align*} using \eqref{eq:est-t},
where 
$\gamma(S)$ is the $\gamma$-bound of $\{S(t):\, t\in [0,T]\}$.
Similarly the mapping  $t\mapsto \int_t^T S(s-t)V(s)\ud W(s)$ is seen to belong to $C([0,T];L^p(\Om;X))$.
Indeed for adapted $X$-valued step processes $V$, which are dense in
$L_\F^p(\Om,\ggX)$, the mapping  $t\mapsto \int_t^T S(s-t)V(s)\ud W(s)$ is continuous and satisfies
\begin{align*}
\ &  \sup_{t\in [0,T]} \Big\n t\mapsto \int_t^T S(s-t)V(s)\ud W(s) \Big\n_{L^p(\Om;X)} 
\\ & \qquad \lesssim_{p,X} \sup_{t\in [0,T]} \n s\mapsto  S(s-t)V(s)\n_{L^p(\Om;\gamma(T-t,T;X))}
 \le \gamma(S)\n V\n_{L^p(\Om;\gamma(0,T;X))}
\end{align*}
using Theorem \ref{thm:Itoisomorph}. 
\end{proof}

From the proof we see that $U$ is in $L^p(\Om;C([0,T];X))$ if and only if $t\mapsto \int_t^T S(s-t)V(s)\ud W(s)$ 
is in $L^p(\Om;C([0,T];X))$, but the latter is not to be expected unless we make 
additional conditions implying maximal estimates for stochastic convolutions (such as in \cite[Section 4]{VerWei}).

\subsection{The case $A=0$, $f(t,\om,x,y) = f(t,\om)$}\label{subsec:A=0}
We begin by considering the problem
\begin{equation}\label{eq:BSDE0}
\left\{
\begin{aligned}
 \ud U(t) & = f(t) \ud t + V(t)\ud W(t), \quad t\in [0,T], \\
  U(T) & = u_T,
  \end{aligned}\right.
\end{equation}
assuming (H1)--(H3) as well as 

\medskip\noindent
(H6)$'$ \  $f$ defines an element of $L_{\F}^p(\Om;\gamma(0,T;X))$.

\medskip\noindent
We comment on this assumption in Remark \ref{rem:H6} below.
Even though \eqref{eq:BSDE0} is a special case of the problem \eqref{eq:BSDE1} considered in the next subsection, it is instructive to treat it separately. 

Following the ideas of \cite{PP} we define
the $X$-valued process $M$  by
$$ M(t): = \E\Bigl( u_T - \int_0^T f(s)\ud s \Big| \calF_t\Bigr).$$
By \cite[Theorems 4.7, 5.13]{NVW07} this is a continuous $L^p$-martingale with respect to $\F$ in $X$ and there exists a unique $V\in L_{\F}^p(\Om;\ggX)$ such that
\begin{equation}\label{eq:V} M(t) = M(0) + \int_0^t V\ud W.
\end{equation}
By \cite[Theorems 4.5, 5.12]{NVW07} and the observations in Subsection \ref{subsec:radonif}
combined with Lemma \ref{lem:adapted}, both $M$ and the $\F$-adapted process
\begin{equation}\label{eq:U}  U(t) := M(t)+ \int_0^t f(s)\ud s
\end{equation}
belong to $L_\F^p(\Omega;\gamma(0,T;X))$. 

\begin{proposition}\label{prop:simplecase1} Let (H1)-(H3) and (H6)$'$ be satisfied. Then the problem
\eqref{eq:BSDE0} admits a unique mild $L^p$-solution $(U,V)$. It is given by 
the pair constructed in \eqref{eq:V} and \eqref{eq:U}.
\end{proposition}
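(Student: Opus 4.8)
The plan is to verify that the pair $(U,V)$ constructed in \eqref{eq:V} and \eqref{eq:U} solves the integral equation of Definition \ref{def:Lp-solution} with $A=0$ and $S\equiv I$, and then to establish uniqueness. For existence, I would start from the defining identity $U(t) = M(t) + \int_0^t f(s)\ud s$ and the martingale representation $M(t) = M(0) + \int_0^t V\ud W$. Evaluating at $t=T$ gives $u_T = U(T) = M(0) + \int_0^T V\ud W + \int_0^T f(s)\ud s$, while for general $t$ we have $M(0) + \int_0^t V\ud W = U(t) - \int_0^t f(s)\ud s$. Subtracting, $u_T = U(t) - \int_0^t f(s)\ud s + \int_t^T V\ud W + \int_0^T f(s)\ud s = U(t) + \int_t^T f(s)\ud s + \int_t^T V\ud W$, which is exactly the required identity (with $S(s-t)=I$). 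The only subtlety is that all these manipulations — adding the deterministic integral $\int_0^t f(s)\ud s$ and splitting the stochastic integral as $\int_0^T = \int_0^t + \int_t^T$ — must be justified as identities in $L^p(\Om;X)$, which follows from the linearity of the (extended) stochastic integral on $L^p_\F(\Om;\gamma(0,T;X))$ guaranteed by Theorem \ref{thm:Itoisomorph} and from the fact, noted in Subsection \ref{subsec:radonif}, that the formal integral of an element of $\gamma(0,T;X)$ is a well-defined $\frac12$-Hölder continuous $X$-valued function. I should also confirm that $U$ and $V$ are continuous $\F$-adapted processes defining elements of $L^p_\F(\Om;\ggX)$: for $V$ this is part of its construction in \eqref{eq:V}, and for $U$ it is the content of the paragraph preceding the proposition (using \cite[Theorems 4.5, 5.12]{NVW07}, the observations in Subsection \ref{subsec:radonif}, and Lemma \ref{lem:adapted}); continuity of $M$ is the continuous-martingale assertion, and $t\mapsto\int_0^t f(s)\ud s$ is Hölder continuous by \eqref{eq:est-t}.

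For uniqueness, suppose $(U_1,V_1)$ and $(U_2,V_2)$ are two mild $L^p$-solutions and set $\bar U = U_1-U_2$, $\bar V = V_1-V_2$. Subtracting the two integral identities (the $f$- and $u_T$-terms cancel since $f$ and $u_T$ do not depend on the solution in this case) gives $\bar U(t) = -\int_t^T \bar V(s)\ud W(s)$ for every $t$, and in particular $\bar U(T)=0$. Taking conditional expectations with respect to $\calF_t$ and using that $s\mapsto\int_t^s \bar V\ud W$ is an $L^p$-martingale (Theorem \ref{thm:Itoisomorph}) that vanishes at $s=t$, we get $\bar U(t) = -\E(\int_t^T\bar V\ud W\mid\calF_t) = 0$ in $L^p(\Om;X)$ for each $t$. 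Hence $\int_t^T \bar V\ud W = 0$ in $L^p(\Om;X)$ for all $t$; applying the two-sided estimate of Theorem \ref{thm:Itoisomorph} to the integrand $\one_{(t,T)}\bar V$ yields $\n\bar V\n_{L^p(\Om;\gamma(t,T;X))}=0$ for all $t$, and letting $t\downarrow 0$ gives $\bar V = 0$ in $L^p_\F(\Om;\ggX)$, whence also $\bar U = 0$.

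I expect the main obstacle to be purely bookkeeping rather than conceptual: one must be careful that every identity is asserted in the correct space and that the formal integral notation of Remark \ref{rem:abuse} is handled consistently — in particular that $\int_0^t f\ud s + \int_t^T f\ud s = \int_0^T f\ud s$ and that conditioning commutes appropriately with the stochastic integral. Since $A=0$ here, no semigroup estimates or $\gamma$-boundedness arguments enter, and the Lipschitz/growth parts of (H6) are irrelevant (only (H6)$'$ is used), so there is no fixed-point iteration to run; the entire proof reduces to the martingale representation \eqref{eq:V} together with the elementary linearity and isometry properties of the $L^p$-stochastic integral.
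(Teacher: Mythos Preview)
Your proposal is correct and follows essentially the same route as the paper: verify the mild-solution identity by algebraically combining the definitions $U(t)=M(t)+\int_0^t f$ and $M(t)=M(0)+\int_0^t V\ud W$ together with $M(T)=u_T-\int_0^T f$, and prove uniqueness by subtracting two solutions, taking $\calF_t$-conditional expectations to kill the stochastic integral, and then invoking the It\^o isomorphism. One trivial simplification: in your uniqueness argument there is no need to let $t\downarrow 0$---the identity $\int_t^T\bar V\ud W=0$ already holds at $t=0$, so a single application of Theorem~\ref{thm:Itoisomorph} gives $\bar V=0$ in $L^p(\Om;\gamma(0,T;X))$ directly (the paper does exactly this, and also remarks that uniqueness of $V$ is in any case implicit in the uniqueness assertion of the martingale representation \eqref{eq:V}).
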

\begin{proof}
Let $U$ and $V$ be defined by  \eqref{eq:V} and \eqref{eq:U}.
We have already checked that $U$
and $V$ belong to $L_{\F}^p(\Om;\ggX)$.
To show that $(U,V)$ is an $L^p$-solution, note that
\begin{equation*}
\begin{aligned}
\ &  U(t) + \int_t^T f(s)\ud s + \int_t^T V\ud W
\\ &\qquad  = \Bigl( M(t)+ \int_0^t f(s)\ud s \Bigr) + \int_t^T f(s)\ud s + (M(T) - M(t))
\\ &\qquad  = \int_0^T f(s)\ud s + M(T)
\\ &\qquad  = \int_0^T f(s)\ud s +\Bigl(u_T - \int_0^T f(s)\ud s \Big)
\\ & \qquad = u_T.
 \end{aligned}
 \end{equation*}

Concerning uniqueness, suppose $(\wt U, \wt V)$ is another $L^p$-solution.
Then
\begin{equation}\label{eq:unique} \wt U(t) - U(t) + \int_t^T (\wt V - V)\ud W = 0 \quad \forall t\in [0,T].
\end{equation}
Taking conditional expectations with respect to $\calF_t$ it follows that
$ \wt U(t) - U(t) = 0$, where we used \cite[Proposition 4.3]{NVW07} to see that the conditional
expectation of the stochastic integral vanishes. Uniqueness of $V$ is already implicit in the uniqueness part of \eqref{eq:V}. 
It also follows from \eqref{eq:unique}, where $\wt U = U$ gives
$\int_t^T (\wt V - V)\ud W = 0 $ for all $t\in [0,T]$. Taking $t=0$
and taking $L^p$-means, using \cite[Theorem 3.5]{NVW07} it follows that
$$\n \wt V - V\n_{L^p(\Om;\ggX)}\eqsim_{p,X} \E \Big\n\int_0^T (\wt V - V)\ud W\Big\n^p  = 0,$$
and therefore $\wt V = V$ in $L^p(\Om;\ggX)$.
\end{proof}

\begin{remark}\label{rem:H6}
 The reader may check that, {\em mutatis mutandis}, Proposition \ref{prop:simplecase1} admits a version when (H6)$'$ 
is replaced by the simpler condition $f\in L_{\F}^p(\Om;L^1(0,T;X))$. That the integral in \eqref{eq:U} 
defines an element of  $L_\F^p(\Omega;\gamma(0,T;X))$ then follows from \cite[Proposition 9.7.1]{HNVW2} .
The motivation for the present formulation of (H6)$'$ is that it is a special case
 of the assumption (H6) needed in the final section where mixed $L^p$-$L^1$ conditions do not seem to work.
\end{remark}

\subsection{The case $f(t,\om,x,y)=f(t,\om)$}

We now consider the problem
\begin{equation}\label{eq:BSDE1}
\left\{\begin{aligned}
 \ud U(t) +AU(t)\ud t & = f(t)\ud t + V(t)\ud W(t), \quad t\in [0,T], \\
  U(T) & = u_T,
\end{aligned}\right.
\end{equation}
assuming (H1)--(H4) and (H6)$'$. 
Our proof of the well-posedness of the problem \eqref{eq:BSDE1} relies on the following lemma,
where $s$ and $\sigma$ denote two time variables; the dependence on $\omega$ is suppressed.
To give a meaning to the expression in the second condition below we recall from \eqref{eq-gammaLpX} the isomorphism of Banach spaces
$$ \gamma(0,T;L^p(\Om;Y)) \eqsim_p L^p(\Om;\gamma(0,T;Y)).$$ This isomorphism allows us to interpret, in condition (2) below, $k$ as an element of 
$\gamma(0,T;L_{\F}^p(\Om;\ggX))$.

\begin{lemma}\label{ch-1-lemma1} Let (H1), (H2), and (H6)$'$ be satisfied. There exists a unique 
$k\in L_{\F}^p(\Om;\gamma(0,T;\ggX))$ satisfying
the following conditions:
\begin{enumerate}
\item[\rm(1)] almost surely, $k$ is supported on the set
$\{(s,\sigma)\in [0,T]\times[0,T]: \ \sigma\le s\}$;

\item[\rm(2)]\label{ch-1-lemma1-eq1} for almost all $s\in [0,T]$ we have
\begin{equation*}
f(s) = \E f(s) + \int_0^s k(s,\sigma)\ud W(\sigma)
\ \ \hbox{in $L^p(\Om;X)$};
\end{equation*}

\item[\rm(3)]
we have the estimate 
\begin{equation*}
\n k\n_{L^p(\Om;\gamma(0,T;\ggX))}\lesssim_{p,X}
\n f\n_{L^p(\Om;\ggX))}.
\end{equation*}
\end{enumerate}
\end{lemma}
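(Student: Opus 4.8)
\emph{Plan.} The statement is a version of the martingale representation theorem in which $f$ carries an extra parameter $s$; here $\sigma$ is the integration time, and since $f$ is $\F$-adapted the representation of $f(s)$ will automatically be supported on $(0,s)$, which is condition~(1). The plan is to treat a \emph{fixed} value of the parameter first, thereby obtaining a bounded linear operator between the relevant $L^p$-spaces, and then to let this operator act in the parameter variable by the ideal property of $\gamma$-radonifying operators.

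\textbf{Step 1 (fixed parameter).} For $\xi\in L^p(\Om,\calF_T;X)$ the process $t\mapsto \E(\xi\mid\calF_t)-\E\xi$ is a mean-zero $L^p$-martingale in the UMD space $X$, so by the martingale representation theorem of \cite[Theorems 4.7 and 5.13]{NVW07} there is a unique $R\xi\in L_\F^p(\Om;\gamma(0,T;X))$ with $\xi-\E\xi=\int_0^T R\xi\,\ud W$; if $\xi$ is $\calF_t$-measurable the martingale is constant on $[t,T]$, so $R\xi$ is supported on $(0,t)$ and is itself $\calF_t$-measurable as a $\gamma(0,T;X)$-valued random variable. The It\^o isomorphism (Theorem~\ref{thm:Itoisomorph}) together with contractivity of $\E$ on $L^p$ gives $\n R\xi\n_{L^p(\Om;\gamma(0,T;X))}\lesssim_{p,X}\n\xi\n_{L^p(\Om;X)}$, so $R\colon L^p(\Om,\calF_T;X)\to L_\F^p(\Om;\gamma(0,T;X))$ is bounded and linear, and $\mathrm{SI}\circ R=\mathrm{id}-\E$, where $\mathrm{SI}\colon g\mapsto\int_0^T g\,\ud W$ is the stochastic integral map, which is bounded and injective on $L_\F^p(\Om;\gamma(0,T;X))$ by Theorem~\ref{thm:Itoisomorph}.

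\textbf{Step 2 (lifting in the parameter).} Using the isomorphism \eqref{eq-gammaLpX} I identify $f\in L_\F^p(\Om;\ggX)$ with an element of $\gamma(0,T;L^p(\Om;X))$ and apply the ideal property of $\gamma$-spaces (equivalently, the Kalton--Weis multiplier theorem \cite[Theorem 9.5.1]{HNVW2} for the constant, hence $\gamma$-bounded, multiplier $R$) to obtain $k:=R\circ f\in\gamma(0,T;L^p(\Om;\gamma(0,T;X)))$ with $\n k\n\le\n R\n\,\n f\n$. A second use of \eqref{eq-gammaLpX}, now with $X$ replaced by $\gamma(0,T;X)$, identifies the target with $L^p(\Om;\gamma(0,T;\ggX))$, and chaining the estimates yields bound~(3). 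On the dense subspace of $\F$-adapted step processes $f=\sum_j \one_{(a_j,b_j)\times F_j}\otimes x_j$ (with $F_j\in\calF_{a_j}$) every object is a genuine function, $k$ is given pointwise by $k(s,\cdot)=R(f(s))$, and one reads off (1), (2), and — using the $\calF_{a_j}$-measurability of $R(\one_{F_j}x_j)$ from Step~1 — that $k$ is a limit of $\gamma(0,T;X)$-valued $\F$-adapted step processes, i.e.\ $k\in L_\F^p(\Om;\gamma(0,T;\ggX))$. Since $R\circ(\cdot)$ and the maps occurring in (1) (a projection multiplier) and (2) ($\mathrm{SI}$ acting in $\sigma$) are bounded, these three properties pass from the dense subspace to its closure.

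\textbf{Step 3 (uniqueness) and main obstacle.} If $k$ and $k'$ both satisfy (1)--(2), then (1)--(2) say precisely that $\mathrm{SI}$, applied in $\sigma$ pointwise in $s$, sends both to $f-\E f$; as $\mathrm{SI}$ is injective on $L_\F^p(\Om;\gamma(0,T;X))$, its $\gamma$-lift is injective on $\gamma(0,T;L_\F^p(\Om;\gamma(0,T;X)))$, whence $k=k'$. The main obstacle is not computational but one of bookkeeping: elements of $\gamma(0,T;X)$ are not functions, so $f(s)$, $k(s,\sigma)$ and identity~(2) must be read in the sense of Remark~\ref{rem:abuse}, and one has to ensure the parametrised construction respects both adaptedness — which forces the use of $\calF_{a_j}$-, not $\calF_T$-, measurable coefficients on the step processes — and the support condition~(1). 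This is made rigorous by carrying out the honest construction on $\F$-adapted step processes and transporting it to the closure via the boundedness of $R$, of $\mathrm{SI}$, of $\E$, and of the isomorphisms~\eqref{eq-gammaLpX}.
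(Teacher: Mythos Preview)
Your proof is correct and follows the same overall strategy as the paper: approximate $f$ by $\F$-adapted step processes, apply the UMD martingale representation theorem on each piece, and pass to the limit. The difference lies in how the crucial estimate~(3) (equivalently, the Cauchy estimate for the approximants) is obtained. The paper proves it by an explicit chain of equivalences --- the Gaussian-series description of the $\gamma$-norm, the It\^o isomorphism applied under an independent randomisation, and the stochastic Fubini theorem --- leading to the identity $\n k_n-k_m\n \eqsim_{p,X}\n (f_n-\E f_n)-(f_m-\E f_m)\n$. You instead recognise that the ``fixed-$s$'' representation is a single bounded linear operator $R\colon L^p(\Om,\calF_T;X)\to L_\F^p(\Om;\gamma(0,T;X))$ and then invoke the ideal property of $\gamma$-radonifying operators (your ``constant Kalton--Weis multiplier'') to lift $R$ in the parameter $s$ in one step. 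This is more conceptual, avoids stochastic Fubini entirely at this stage, and makes bound~(3) immediate; the paper's computation is in effect a hands-on proof of exactly this lifting. You also supply a proof of the uniqueness assertion (via injectivity of the stochastic integral on the adapted subspace and its $\gamma$-lift), which the paper states but does not argue.
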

The precise meaning of condition (1) is that for almost all $\om\in \Om$,
the operator $k(\om)\in \gamma(0,T;\ggX)$ vanishes on all $f\in L^2(0,T)\otimes L^2(0,T)$, which, as 
 functions on $(0,T)\times (0,T)$, are supported
on the set  $\{(s,\sigma)\in (0,T)\times[0,T]: \ \sigma > s\}$.

\begin{proof} Since by assumption $f\in L_\F^p(\Om;\ggX)$, 
we may pick a sequence of adapted step
processes $\{f_n\}_{n=1}^\infty$ such that $f_n\to f$ in
$L^p(\Om;\ggX))$ as $n\to\infty$.
For each $n\ge 1$ we then may write $$ f_n(s,\om)=\sum_{i=0}^{N_n-1}
\one_{[t_{n,i},t_{n,i+1})}(s)\xi_{n,i}(\om)$$ where
$\{t_{n,0},t_{n,1},\cdots,t_{n,N_n}\}$ is a partition of $[0,T]$
and the random variables $\xi_{n,i}\in
L^p(\Omega;X)$ are strongly $\calF_{t_{n,i}}$-measurable.
By \cite[Theorem 3.5]{NVW07} there exist $k_{n,i}\in
L^p_{\F}(\Om;\ga(0,t_{n,i};X))$ such that
\begin{equation*}
\xi_{n,i} = \E\xi_{n,i} + \int_0^{t_{n,i}}k_{n,i}\ud W.
\end{equation*}
In what follows we will identify $k_{n,i}$ with elements of
$L^p_{\F}(\Om;\ga(0,T;X))$ in the natural way.
Put
$$ k_n(s,\sigma) := \sum_{i=0}^{N_n-1}
\one_{[t_{n,i},t_{n,i+1})}(s)\one_{[0,t_{n,i})}(\sigma)
k_{n,i}(\sigma).
$$
Each $k_n$ satisfies the support condition of (1) and
\begin{equation}\label{2.15-eq3}
f_n(s) = \E f_n(s) + \int_0^s k_n(s,\sigma)\ud W(\sigma).
\end{equation}
Choose an orthonormal basis $\{h_j\}_{j\ge 1}$ for $L^2(0,T)$
and let $\{\gamma_j'\}_{j\ge 1}$ be a Gaussian sequence on an independent probability space
$(\Omega',\P')$. Then,
by \cite[Theorem 9.1.17]{HNVW2}, the It\^o isomorphism of Theorem \ref{thm:Itoisomorph}, 
and the stochastic Fubini theorem (see, e.g., \cite{NeeVer}) and keeping in mind the support properties, we have
\begin{equation}\label{2.15-eq4}
\begin{aligned}
\ & \big\n s\mapsto k_n(s,\cdot)-k_m(s,\cdot)\big\n_{\gamma(0,T;L^p(\Om;\ggX))}^p 
\\ & \qquad \eqsim_{p} \E' \Big\n\sum_{j\ge 1} \gamma_j'  \int_0^T h_j(s)(k_n(s,\cdot)-k_m(s,\cdot))\ud s \Big\n_{L^p(\Om;\ggX)}^p 
\\ & \qquad \eqsim_{p,X} 
\E'\E \Big\n\sum_{j\ge 1} \gamma_j' \int_0^T \int_0^T h_j(s)(k_n(s,\sigma)-k_m(s,\sigma))\ud s\ud W(\sigma) \Big\n^p
\\ & \qquad = 
\E\E' \Big\n\sum_{j\ge 1} \gamma_j' \int_0^T h_j(s)\int_0^s (k_n(s,\sigma)-k_m(s,\sigma))\ud W(\sigma)\ud s \Big\n^p
\\ & \qquad \eqsim_{p} 
\E \Big\n s\mapsto \int_0^s (k_n(s,\sigma)-k_m(s,\sigma))\ud W(\sigma) \Big\n_{\gamma(0,T;X)}^p
\\ & \qquad =
\E \big\n s\mapsto [f_n(s)-f_m(s) - (\E f_n(s)-\E f_m(s))] \big\n_{\gamma(0,T;X)}^p,
\end{aligned}
\end{equation}
and therefore
$$ \big\n s\mapsto k_n(s,\cdot)-k_m(s,\cdot)\big\n_{\gamma(0,T;L^p(\Om;\ggX))}
\lesssim_{p,X}\n f_n -f_m\n_{L^p(\Omega;\gamma(0,T;X))}.
$$
Since $\{f_n\}_{n=1}^\infty$ is a Cauchy
sequence in $\gamma(0,T;L^p(\Omega;X))$, the estimate \eqref{2.15-eq4} implies that
$\{k_n\}_{n=1}^\infty$ is a Cauchy sequence in
$\gamma(0,T;L^p(\Om;\ggX))$. Let $k\in \gamma(0,T;L^p(\Om;\ggX)) 
\eqsim L^p(\Om; \gamma(0,T;\ggX))$ be its limit.
By adaptedness of the $k_n$ we have $L_\F^p(\Om; \gamma(0,T;\ggX))$, and 
by passing to the limit $n\to\infty$ in \eqref{2.15-eq3}, assertions (1) and (2) are obtained.

Similar to \eqref{2.15-eq4} we have
\begin{equation}\label{2.15-116eq4}
\n s\mapsto k_n(s,\cdot)\n_{\gamma(0,T;L^p(\Om;\ggX))} 
\lesssim_{p,X}\n f_n \n_{\gamma(0,T;L^p(\Omega;X))}.
\end{equation}
Letting $n\to\infty$ in \eqref{2.15-116eq4} we
obtain assertion (3).
\end{proof}

\begin{proposition}\label{prop:simplecase2}
Let (H1)--(H5) and (H6)$'$ be satisfied and assume in addition that $X$ has the upper contraction property. Then the problem
\eqref{eq:BSDE1} admits a unique mild 
$L^p$-solution $(U,V)$.
\end{proposition}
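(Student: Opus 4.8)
The plan is to build the solution explicitly, in the spirit of Pardoux--Peng and Hu--Peng, out of two martingale representations. First, as in the proof of Proposition \ref{prop:simplecase1}, \cite[Theorem 3.5]{NVW07} applied to $u_T$ gives a unique $\zeta\in L_\F^p(\Om;\ggX)$ with $u_T=\E u_T+\int_0^T\zeta\ud W$ and $\n\zeta\n_{L^p(\Om;\ggX)}\lesssim_{p,X}\n u_T\n_{L^p(\Om;X)}$. Second, Lemma \ref{ch-1-lemma1} applied to $f$ yields $k\in L_\F^p(\Om;\gamma(0,T;\ggX))$, supported almost surely on $\{(s,\sigma):\sigma\le s\}$, with $f(s)=\E f(s)+\int_0^s k(s,\sigma)\ud W(\sigma)$ and $\n k\n_{L^p(\Om;\gamma(0,T;\ggX))}\lesssim_{p,X}\n f\n_{L^p(\Om;\ggX)}$. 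I then set (with the usual abuse of notation, cf.\ Remark \ref{rem:abuse})
\[ V(\sigma):=S(T-\sigma)\zeta(\sigma)-\int_\sigma^T S(s-\sigma)k(s,\sigma)\ud s \]
and, guided by the requirement that $U(t)$ be $\calF_t$-measurable,
\[ U(t):=S(T-t)\E u_T-\int_t^T S(s-t)\E f(s)\ud s+S(T-t)\int_0^t\zeta(\sigma)\ud W(\sigma)-\int_0^t\Bigl(\int_t^T S(s-t)k(s,\sigma)\ud s\Bigr)\ud W(\sigma). \]

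The first task is to verify that $(U,V)$ is a mild $L^p$-solution. Substituting the two representations of $u_T$ and $f$ into $U(t)+\int_t^T S(s-t)f(s)\ud s+\int_t^T S(s-t)V(s)\ud W(s)$, using the stochastic Fubini theorem to interchange $\int_t^T(\cdot)\ud s$ with the stochastic integrals and keeping track of the support of $k$, one finds that all the stochastic integrals over $(t,T)$ combine into
\[ \int_t^T S(\sigma-t)\Bigl[S(T-\sigma)\zeta(\sigma)-\int_\sigma^T S(s-\sigma)k(s,\sigma)\ud s-V(\sigma)\Bigr]\ud W(\sigma), \]
which vanishes by the semigroup identities $S(T-t)=S(\sigma-t)S(T-\sigma)$ and $S(s-t)=S(\sigma-t)S(s-\sigma)$ (valid for $t\le\sigma\le s$) and the definition of $V$; the remaining terms add up to $S(T-t)u_T$. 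The same computation run in reverse shows that the right-hand side of the displayed formula for $U$ equals $\E\bigl(S(T-t)u_T-\int_t^T S(s-t)f(s)\ud s\mid\calF_t\bigr)$, so $U(t)$ is $\calF_t$-measurable; time regularity of $U$ then follows as in the proposition on time regularity of $U$ above, using \eqref{eq:est-t} and Theorem \ref{thm:Itoisomorph}.

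The substantive part is to show that $U$ and $V$ define elements of $L_\F^p(\Om;\ggX)$; by adaptedness and Lemma \ref{lem:adapted} it suffices to prove membership in $L^p(\Om;\ggX)$. The deterministic part of $U$ belongs to $\ggX$ by a time-reversed form of Lemma \ref{lem:gammaDeterm}(1) combined with the Kalton--Weis multiplier theorem applied to the $\gamma$-bounded family $\{S(r):r\in[0,T]\}$. The term $t\mapsto S(T-t)\int_0^t\zeta\ud W$ is handled by noting that $t\mapsto\int_0^t\zeta\ud W$ is a continuous $L^p$-martingale, hence defines an element of $L_\F^p(\Om;\ggX)$ by \cite[Theorems 4.5, 5.12]{NVW07}, followed by the multiplier theorem for $t\mapsto S(T-t)$. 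For the doubly stochastic term of $U$ one proceeds as in the chain of estimates \eqref{2.15-eq4}: with an orthonormal basis of $L^2(0,T)$, an independent Gaussian sequence, the It\^o isomorphism and the stochastic Fubini theorem, its $L^p(\Om;\ggX)$-norm becomes comparable to the norm of $(\sigma,t)\mapsto\int_t^T S(s-t)k(s,\sigma)\ud s$ in an iterated space $\gamma(0,T;\gamma(0,T;L^p(\Om;X)))$ in which the \emph{stochastic} variable $\sigma$ is the inner one; that norm is controlled by $T\gamma(S)\n k\n_{\gamma(0,T;\gamma(0,T;L^p(\Om;X)))}$ through a time-reversed form of Lemma \ref{lem:gammaDeterm}(1) acting in the outer variable $s$, and hence by $\n f\n_{L^p(\Om;\ggX)}$ by Lemma \ref{ch-1-lemma1}(3) --- with no appeal to the contraction property. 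The one place where the upper contraction property is indispensable is the second term of $V$: there the variable $s$ being integrated out is the \emph{outer} variable of the space $\gamma(0,T;\ggX)$ carrying $k$, and $S(s-\sigma)$ couples $s$ and $\sigma$. To deal with it I would invoke Lemma \ref{lem:triangular} --- this is exactly where the upper contraction property of $X$, and therefore of $L^p(\Om;X)$, is used --- to re-regard $k(\om)$ as an element of the symmetric space $\gamma(\Delta;X)$, and then apply a time-reversed form of Lemma \ref{lem:gammaDeterm}(2), which is designed precisely for such coupled integrations over $\Delta$, obtaining $\n V\n_{L^p(\Om;\ggX)}\lesssim_{p,X}\gamma(S)\bigl(\n u_T\n_{L^p(\Om;X)}+\n f\n_{L^p(\Om;\ggX)}\bigr)$.

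I expect this last point to be the main obstacle: the whole decomposition of $U$ and $V$ must be arranged so that every passage between $\gamma$-norms either keeps the stochastic integration variable in the inner slot or is routed through the symmetric space $\gamma(\Delta;X)$, because only the upper --- and not the lower --- contraction property is at our disposal. Uniqueness then goes just as in Proposition \ref{prop:simplecase1}: if $(\wt U,\wt V)$ is a second mild $L^p$-solution, subtracting the two mild identities and conditioning on $\calF_t$ (the conditional expectation of the stochastic integral over $(t,T)$ vanishes by \cite[Proposition 4.3]{NVW07}) gives $\wt U=U$; feeding this back yields $\int_t^T S(s-t)(\wt V-V)\ud W(s)=0$ for all $t\in[0,T]$, and the It\^o isomorphism on subintervals together with the strong continuity of $S$ then forces $\wt V=V$ in $L^p(\Om;\ggX)$.
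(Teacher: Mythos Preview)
Your overall scheme is exactly the paper's: represent $u_T$ and $f$ via martingale representation and Lemma \ref{ch-1-lemma1}, write down $U$ and $V$ explicitly, verify the mild identity, and run the uniqueness argument via conditional expectations and the It\^o isomorphism. The construction of $V$, the verification, and the uniqueness are fine.

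The gap is in your treatment of the doubly stochastic term of $U$, where you assert that no contraction property is needed. When you follow the recipe of \eqref{2.15-eq4} on $t\mapsto \int_0^t\bigl(\int_t^T S(s-t)k(s,\sigma)\ud s\bigr)\ud W(\sigma)$, the It\^o isomorphism produces the norm of $(t,\sigma)\mapsto \one_{\{\sigma<t\}}\int_t^T S(s-t)k(s,\sigma)\ud s$ in $\gamma_t(0,T;\gamma_\sigma(0,T;L^p(\Om;X)))$, \emph{with} the indicator recording the upper limit of the stochastic integral. In \eqref{2.15-eq4} that indicator is harmless because $k$ itself is supported on $\{\sigma\le s\}$; here your integrand $G(t,\sigma)=\int_t^T S(s-t)k(s,\sigma)\ud s$ is \emph{not} supported on $\{\sigma<t\}$, so the indicator genuinely cuts. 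Your ``time-reversed Lemma \ref{lem:gammaDeterm}(1)'' controls $\|G\|_{\gamma_t;\gamma_\sigma}$, not $\|\one_{\{\sigma<t\}}G\|_{\gamma_t;\gamma_\sigma}$; passing from the latter to the former would require the family of truncations $\{\one_{(0,t)}:t\in(0,T)\}$ to be $\gamma$-bounded on $\gamma_\sigma(0,T;X)$, which amounts to the \emph{lower} contraction property --- precisely what you do not want to assume.

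The paper sidesteps this by working with the equivalent representation of $U$ in the last line of \eqref{12.17-eq3}, whose doubly stochastic term is $t\mapsto\int_t^T S(s-t)\int_t^s k(s,\sigma)\ud W(\sigma)\ud s$. There $t<\sigma<s$, so one can factor $S(s-t)=S(\sigma-t)S(s-\sigma)$ and, after stochastic Fubini and the It\^o isomorphism, reduce to the $t$-independent quantity $\sigma\mapsto\int_\sigma^T S(s-\sigma)k(s,\sigma)\ud s$ --- which is exactly the second term of $V$. The upper contraction property is then invoked once (via Lemmas \ref{lem:gammaDeterm}(2) and \ref{lem:triangular}) and that single estimate serves both $U$ and $V$; see \eqref{eq:IV}. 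So the upper contraction property is not confined to $V$: it is needed for $U$ as well, and your decomposition does not circumvent it.
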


\begin{proof} We extend the argument of \cite{HP} to the UMD setting. As in Subsection \ref{subsec:A=0},
by martingale representation in UMD spaces there
is a unique element $\phi\in L_{\F}^p(\Om;\ggX)$ such
that for all $t\in [0,T]$,
\begin{equation}\label{12.17-eq1}
\E( u_T|\calF_t)=\E u_T + \int_0^t\phi \ud W \ \ \hbox{in $L^p(\Om;X)$}.
\end{equation}
Put
\begin{equation*}
U(t):=\E\Big(S(T-t) u_T - \int_t^T S(s-t)f(s)\ud s\;\Big|\;\calF_t \Big).
\end{equation*}
Let
$k\in L_{\F}^p(\Om;\gamma(0,T;\ggX))$ be the kernel obtained from
Lemma \ref{ch-1-lemma1}. Then for almost all $s\in [0,T]$ we have
\begin{equation}\label{12.17-eq2}
f(s) =\E f(s) + \int_0^s k(s,\sigma)\ud W(\sigma).
\end{equation}

By \eqref{12.17-eq1} (applied to $t$ and $T$ and subtracting the results),
\begin{align}\label{12.17-eq1a}
u_T - \E(u_T|\calF_t) = \int_t^T \phi\ud W. 
\end{align}
The definition of $U$, together with \eqref{12.17-eq2} and \eqref{12.17-eq1a},
 implies that
\begin{equation}\label{12.17-eq3}
\begin{aligned}
U(t) & =  \E(S(T-t) u_T|\calF_t)
-\Bigl(\int_t^T S(s-t) \Bigl(\E f(s)+\int_0^s k(s,\sigma)\ud W(\sigma)\Bigr)\Big|\calF_t\Bigr)\ud s
\\ & =  S(T-t)\E(u_T|\calF_t)
- \int_t^T S(s-t)\Bigl(\E f(s)+\int_0^t k(s,\sigma)\ud W(\sigma)\Bigr)\ud s
\\ & =  S(T-t)\Big(u_T - \int_t^T \phi\ud W\Big)
- \int_t^T S(s-t) \Bigl(f(s)-\int_t^s k(s,\sigma)\ud W(\sigma)\Bigr)\ud s.
 \end{aligned}
 \end{equation}
We will analyse the two terms on the right-hand side separately.

Since by assumption $\{S(t):\ t\in [0,T]\}$ is  $\gamma$-bounded, we may apply the Kalton-Weis multiplier theorem
(\cite[Theorem 9.5.1]{HNVW2})
to see that
$t\mapsto S(T-t)\E u_T$ defines an element of  $L^p(\Om,\ggX)$. By Lemma \ref{lem:adapted}
it then defines an element of  $L_{\F}^p(\Om,\ggX).$
Also,
by \cite[Theorem 4.5]{NVW07}, $t\mapsto \int_t^T \phi\ud W$ defines an element of  $L^p(\Om,\ggX)$,
and by another appeal to $\gamma$-boundedness, the same is true for  $$t\mapsto S(T-t) \int_t^T \phi\ud W.$$
By Lemma \ref{lem:adapted} this mapping defines an element of  $L_{\F}^p(\Om,\ggX).$

We now turn to the second term in the right-hand side of \eqref{12.17-eq3} and consider the two terms in the integral separately. For the first term we observe that 
 $$t\mapsto \int_t^T S(s-t)f(s)\ud s$$ belongs
to $L^p(\Om;\gamma(0,T;X))$ by Lemma \ref{lem:gammaDeterm}(1).
Turning to the second term in the integral, to see that the mapping 
\begin{align*}
t\mapsto \int_t^T S(s-t)\int_t^s  k(s,\sigma)\ud W(\sigma)\ud s
 \end{align*}
defines an element of  $L^p(\Om;\gamma(0,T;X))$ we apply 
 the stochastic Fubini theorem, the isomorphism $L^p(\Om;\gamma(0,T;X))
\eqsim \gamma (0,T;L^p(\Om;X))$, 
Theorem \ref{thm:Itoisomorph}, the isomorphism once more,
Lemma \ref{lem:gammaDeterm}(2), the Kalton--Weis multiplier theorem, and the upper contraction property. This leads to the estimate
\begin{equation}\label{eq:IV}
\begin{aligned}
 & \Big\n t\mapsto \int_t^T S(s-t)\int_t^s k(s,\sigma)\ud W(\sigma)\ud s\Big\n_{L^p(\Om;\gamma(0,T;X))}
\\ & \qquad =
 \Big\n t\mapsto\int_t^T  \int_\sigma^T S(s-t)k(s,\sigma)\ud s\ud W(\sigma)\Big\n_{L^p(\Om;\gamma(0,T;X))}
\\ & \qquad \eqsim_{p,X} \Big\n t\mapsto\int_t^T  \int_\sigma^T S(s-t)k(s,\sigma)\ud s\ud W(\sigma)\Big\n_{\gamma (0,T;L^p(\Om;X))}
\\ & \qquad \eqsim_{p,X}\Big\n t\mapsto\Bigl[\sigma\mapsto \int_\sigma^T S(s-t)k(s,\sigma)\ud s\Big]\Big\n_{\gamma (0,T;L^p(\Om;\gamma(0,T;X)))}
\\ & \qquad \eqsim_{p,X}\Big\n t\mapsto\Bigl[\sigma\mapsto \one_{\{t\le \sigma\}} S(\sigma-t)\int_\sigma^T S(s-\sigma)k(s,\sigma)\ud s\Big]\Big\n_{L^p(\Om;\gamma(0,T;\gamma(0,T;X)))}
\\ & \qquad \le \gamma(S)\Big\n t\mapsto\Bigl[\sigma\mapsto \int_\sigma^T S(s-\sigma)k(s,\sigma)\ud s\Big]\Big\n_{L^p(\Om;\gamma (0,T;\gamma(0,T;X)))}
\\ & \qquad = T^{1/2} \gamma(S)\Big\n\sigma\mapsto \int_\sigma^T S(s-\sigma)k(s,\sigma)\ud s\Big\n_{L^p(\Om;\gamma(0,T;X)))}
\\ & \qquad \lesssim_{p,X}  T \gamma(S)^2 \n k\n_{L^p(\Om;\gamma(\Delta;X))},
\\ & \qquad \eqsim_{p,X}  T \gamma(S)^2 \n k\n_{L^p(\Om;\gamma(0,T;\gamma(0,T;X)))}.
\end{aligned}
\end{equation}

Collecting what has been proved, it follows that  $U\in L_{\F}^p(\Om;\ggX)$, the
adaptedness of $U$ being a consequence of Lemma \ref{lem:adapted} and the representation given by the first identity in \eqref{12.17-eq3}.

By the stochastic Fubini theorem, 
 \begin{align*}
 U(t) & =S(T-t) u_T - \int_t^T S(s-t)f(s)\ud s - \int_t^T S(T-t)\phi(\sigma)\ud W(\sigma)
\\ & \qquad  + \int_t^T\int_\sigma^T S(s-t)k(s,\sigma)\ud s \ud W(\sigma)
\\ & = S(T-t) u_T - \int_t^T S(s-t) f(s) \ud s - \int_t^T S(\sigma-t)V(\sigma)\ud W(\sigma),
\end{align*}
where
\begin{equation}\label{12.17-eq5}
\sigma\mapsto V(\sigma):=S(T-\sigma)\phi(\sigma)+\int_\sigma^T S(s-\sigma)k(s,\sigma)\ud s
\end{equation}
is $\F$-adapted.
It remains to be checked that the process $V$ defines an element of $L_{\F}^p(\Om;\ga(0,T;X))$. This can be done by repeating the arguments which showed the corresponding result for $U$.

\smallskip
Next we prove the uniqueness of the solution. The
proof is very similar to the one for  $A=0$.
Suppose $(\wt U, \wt V)$ is another
$L^p$-solution to \eqref{eq:BSDE1}. Then from
the definition of the mild solution to
\eqref{eq:BSDE1}, we find that
\begin{equation}\label{eq:unique1}
\wt U(t) - U(t) + \int_t^T S(s-t)(\wt V(s) - V(s))\ud
W(s) = 0 
\end{equation}
for all $t\in [0,T]$
By taking conditional expectations with respect
to $\calF_t$ for \eqref{eq:unique1}, we see that
$ \wt U(t) - U(t) = 0$. Thus $\int_t^T
S(s-t)(\wt V(s) - V(s))\ud W(s) = 0 $ for all $t\in
[0,T]$. Taking  $L^p$-means, using \cite[Theorem
3.5]{NVW07} it follows that
$$\n S(\cdot-t)(\wt V(\cdot) - V(\cdot))\n_{L^p(\Om;\ggX)}^p\eqsim_{p,X} \E \Big\n\int_t^T S(s-t)(\wt V(s) - V(s))\ud W(s)\Big\n^p  = 0.$$
Hence, for any $t\in [0,T]$, in $L^p(\Om;\gamma(t,T;X))$ we obtain the equality
$$
S(\cdot-t) \wt V(\cdot) = S(\cdot-t) V(\cdot).
$$
To deduce from this that $\wt V=V$ in $L^p(\Om;\ggX)$ we argue pathwise and prove that if $v\in \gamma(0,T)$
satisfies $S(\cdot-t) v(\cdot) = 0$ in $\gamma(t,T)$ for all $t\in [0,T]$, then $v=0$. Fix an integer $N\ge 1$ and set $t_j = jT/N$ for $j=0,1,\dots,N$. Multiplying the identity $S(\cdot-t_j) v(\cdot) = 0$ by
$S(t_{j+1}-(\cdot-t_j))$ on $I_j:= [t_j,t_{j+1}]$ it follows that $S(T/N)v(\cdot) = 0$ as an element of
$\gamma(t_j,t_{j+1};X)$, $j=0,1,\dots,N-1$, 
and therefore $S(T/N)v(\cdot) = 0$ as an element of $\gamma(0,T;X)$. Now we can apply \cite[Proposition 9.4.6]{HNVW2} to deduce that $v = 0$ as an element of $\gamma(0,T;X)$.
\end{proof}

\subsection{The general case}\label{subsec:generalcase}

In the final section we consider the problem
\begin{equation}\label{eq:BSDE2}
\left\{\begin{aligned}
 {\rm d}U(t) + AU(t)\ud t & = f(t,U(t),V(t))\ud t +  V(t)\ud W(t), \quad t\in [0,T], \\
  U(T) & = u_T,
  \end{aligned}\right.
\end{equation}
under the assumptions (H1)--(H6).

\begin{theorem} Let (H1)--(H6) be satisfied and assume in addition that $X$ has the upper contraction property. 
Then the problem \eqref{eq:BSDE2} admits a unique mild $L^p$-solution $(U,V)$.
\end{theorem}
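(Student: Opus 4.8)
The plan is to set up a Banach fixed point argument on the space $\mathscr{Z} := L_{\mathbb F}^p(\Omega;\gamma(0,T;X)) \times L_{\mathbb F}^p(\Omega;\gamma(0,T;X))$. Given a pair $(U',V')\in\mathscr{Z}$, assumption (H6)(ii) guarantees that $g := f(\cdot,U',V')$ defines an element of $L_{\mathbb F}^p(\Omega;\gamma(0,T;X))$, so Proposition \ref{prop:simplecase2} applies to the linear problem \eqref{eq:BSDE1} with inhomogeneity $g$ and produces a unique mild $L^p$-solution, which I will call $(U,V) =: \Gamma(U',V')$. This defines a map $\Gamma:\mathscr{Z}\to\mathscr{Z}$, and a fixed point of $\Gamma$ is exactly a mild $L^p$-solution of \eqref{eq:BSDE2}; conversely any mild $L^p$-solution is such a fixed point, which will give uniqueness once contractivity is established. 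Note that for this reduction I silently use that (H5) plus (H6) make all the terms in Definition \ref{def:Lp-solution} meaningful, as already remarked after that definition.

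The heart of the matter is the contraction estimate. Given two inputs $(U_1',V_1')$ and $(U_2',V_2')$ with outputs $(U_i,V_i) = \Gamma(U_i',V_i')$, linearity of \eqref{eq:BSDE1} in $(f,u_T)$ shows that $(U_1-U_2, V_1-V_2)$ is the mild $L^p$-solution of \eqref{eq:BSDE1} with zero final value and inhomogeneity $f(\cdot,U_1',V_1') - f(\cdot,U_2',V_2')$. I would then track the explicit representations from the proof of Proposition \ref{prop:simplecase2}: with $u_T = 0$ the martingale part $\phi$ vanishes, so $U_1(t)-U_2(t) = -\int_t^T S(s-t)[g_1(s)-g_2(s)]\,{\rm d}s + \int_t^T S(s-t)\int_t^s [k_1(s,\sigma)-k_2(s,\sigma)]\,{\rm d}W(\sigma)\,{\rm d}s$ where $g_i = f(\cdot,U_i',V_i')$ and $k_i$ is the kernel of $g_i$ from Lemma \ref{ch-1-lemma1}, and $V_1(\sigma)-V_2(\sigma) = \int_\sigma^T S(s-\sigma)[k_1(s,\sigma)-k_2(s,\sigma)]\,{\rm d}s$. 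Applying Lemma \ref{lem:gammaDeterm}(1) to the deterministic convolution term, and for the stochastic term the same chain of steps culminating in \eqref{eq:IV} (stochastic Fubini, the $L^p(\Omega;\gamma)\eqsim\gamma(L^p(\Omega))$ isomorphism, Theorem \ref{thm:Itoisomorph}, Lemma \ref{lem:gammaDeterm}(2), Kalton--Weis, the upper contraction property, and Lemma \ref{ch-1-lemma1}(3)), yields
\begin{align*}
\n U_1-U_2\n_{L^p(\Omega;\gamma(0,T;X))} + \n V_1-V_2\n_{L^p(\Omega;\gamma(0,T;X))} \le C(T)\,\n g_1-g_2\n_{L^p(\Omega;\gamma(0,T;X))},
\end{align*}
with a constant $C(T) = C_{p,X}(T\gamma(S) + T\gamma(S)^2 + \gamma(S) + \dots)$ that depends only on $p$, $X$, $\gamma(S)$ and goes to $0$ as $T\downarrow 0$. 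Combining with the Lipschitz bound (H6)(iii'), $\n g_1-g_2\n \le L(\n U_1'-U_2'\n + \n V_1'-V_2'\n)$, gives that $\Gamma$ is Lipschitz with constant $C(T)L$ on $\mathscr{Z}$.

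For small $T$, namely $C(T)L < 1$, the Banach fixed point theorem gives a unique fixed point, hence existence and uniqueness of the mild $L^p$-solution on $[0,T]$. To remove the smallness restriction I would run the standard backward concatenation argument: partition $[0,T]$ into finitely many subintervals $[T_{j},T_{j+1}]$ each short enough that the above contraction holds on it, solve first on $[T_{N-1},T]$ with final value $u_T$, and then successively on $[T_{j-1},T_j]$ using the already-constructed value $U(T_j)\in L^p(\Omega,\calF_{T_j};X)$ as the new final datum (which satisfies (H3) on that subinterval); the pieces glue into a mild $L^p$-solution on all of $[0,T]$ because the mild-solution identity on $[0,t]$ for $t$ in a later subinterval decomposes, via the semigroup law and $\int_t^T = \int_t^{T_j} + \int_{T_j}^T$, into the identities on the sub-pieces. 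The main obstacle I anticipate is bookkeeping rather than conceptual: making sure that the constant $C(T)$ in the contraction estimate is genuinely uniform over subintervals (it is, since it only involves $p$, $X$, $\gamma(S)$, and the length $T_{j+1}-T_j$), that "restricting to a subinterval" is compatible with all the function spaces $L_{\mathbb F}^p(\Omega;\gamma(T_j,T_{j+1};X))$ and with the upper contraction property (both inherited), and that the concatenated pair is genuinely adapted and continuous — the latter handled exactly as in the Proposition preceding Subsection \ref{subsec:A=0}. A minor point to get right is that the Lipschitz and growth assumptions in (H6) are stated globally in time, so their restrictions to subintervals are immediate; no localization in the state variable is needed since (H6) already posits global Lipschitz continuity.
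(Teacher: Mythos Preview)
Your proposal is correct and follows essentially the same route as the paper: both reduce to the linear case of Proposition~\ref{prop:simplecase2}, extract the explicit representations for $U$ and $V$ from its proof (the deterministic convolution, the term estimated in \eqref{eq:IV}, and the formula \eqref{12.17-eq5} for $V$), bound the differences via Lemma~\ref{lem:gammaDeterm}, Lemma~\ref{ch-1-lemma1}(3), and the upper contraction property to get a small-time contraction, and then patch subintervals backward. The only cosmetic difference is that the paper writes out the Picard iterates $(U_n,V_n)$ explicitly, whereas you phrase the same thing as a Banach fixed point for the map $\Gamma$; one small payoff of your formulation is that uniqueness on each subinterval comes for free from the contraction, so you don't need a separate uniqueness step. (Do clean up the constant: every term in $C(T)$ carries at least a factor $T^{1/2}$, so the stray ``$+\,\gamma(S)+\dots$'' in your displayed expression for $C(T)$ should be dropped.)
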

\begin{proof}
Following the ideas of \cite{PP} the existence proof proceeds by a Picard iteration argument,
where the existence and uniqueness in each iteration follows from the well-posedness of the 
problem \eqref{eq:BSDE1} considered in the previous subsection. 

\smallskip
{\em Step 1} -- 
In this step we prove the existence of an $L^p$-solution
on the interval $I_\delta:= [T-\delta,T]$ for $\delta\in (0,T)$ small enough. 

Set $U_0=0$ and $V_0=0$ and define the pair $(U_{n+1},V_{n+1}) \in L^p_{\F}(\Om;\g(I_\delta;X))\times L^p_{\F}(\Om;\g(I_\delta;X))$  inductively
as the unique mild $L^p$-solution of the problem
\begin{equation*}\left\{
\begin{aligned}
 {\rm d}U(t) & = - AU(t)\ud t+f(t,U_{n}(t), V_{n}(t))\ud t + V_{n}(t)\ud W(t), \quad t\in I_\delta, \\
  U(T) & = u_T.
\end{aligned}
\right.
\end{equation*}
Note that at each iteration the function $t\mapsto g_n(t) := f(t,U_{n}(t), V_{n}(t))$ defines an element of  $ L^p_{\F}(\Om;\g(I_\delta;X))$ by (H6)
with norm 
$$ \n g_n\n_{L^p_{\F}(\Om;\g(I_\delta;X))} \le C(1+\n U_{n}\n_{L^p_{\F}(\Om;\g(I_\delta;X))} + \n V_n\n_{L^p_{\F}(\Om;\g(I_\delta;X))})$$
with a constant $C\ge 0$ independent of $U_n$ and $V_n$.
By Proposition \ref{prop:simplecase2},
\begin{align*}
 \n U_1 - U_0\n_{L^p_{\F}(\Om;\g(I_\delta;X))} = \n U_1\n_{L^p_{\F}(\Om;\g(I_\delta;X))}
 &\le C (\n g_0\n_{L^p_{\F}(\Om;\g(I_\delta;X))} + \n u_T\n_{L^p(\Om;X)}), \\
  \n V_1-V_0\n_{L^p_{\F}(\Om;\g(I_\delta;X))}=  \n V_1\n_{L^p_{\F}(\Om;\g(I_\delta;X))} & \le C(\n g_0\n_{L^p_{\F}(\Om;\g(I_\delta;X))}+\n u_T\n_{L^p(\Om;X)}),
\end{align*}
where $C\ge 0$ is a constant independent of $f$ and $u_T$.

For $n\ge 1$, by \eqref{12.17-eq3} we can estimate 
\begin{equation*}
\begin{aligned}
\ & \n U_{n+1} - U_{n}\n_{L^p_{\F}(\Om;\g(I_\delta;X))} 
\\ & \ \, \le
\Big\n t\mapsto 
\int_t^T S(s-t) (g_n(s)-g_{n-1}(s)) \ud s\Big\n_{L^p_{\F}(\Om;\g(I_\delta;X))}
\\ & \qquad + \Big\n t\mapsto\int_t^T S(s-t) \int_t^s (k_n(s,\sigma)-k_{n-1}(s,\sigma))\ud W(\sigma)\ud s\Big\n_{L^p_{\F}(\Om;\g(I_\delta;X))}
\\ &  = (I)+(II).
\end{aligned}
\end{equation*}
We estimate these terms separately. To to estimate (I) we use Lemma \ref{lem:gammaDeterm}(1) with $[0,T]$ replaced by $I_\delta$:
\begin{align*}
 (I) & = \Big\n t\mapsto \int_t^T S(s-t) (g_n(s)-g_{n-1}(s)) \ud s\Big\n_{L^p(\Om;\gamma(I_\delta;X))}
\\ & \le  \delta \gamma(S)\n g_n-g_{n-1}\n_{L^p(\Om;\gamma(I_\delta;X))}
\\ & \le L\delta \gamma(S)
(\n U_n- U_{n-1}\n_{L^p_{\F}(\Om;\g(I_\delta;X))} + \n V_n - V_{n-1}\n_{L^p_{\F}(\Om;\g(I_\delta;X))}),
\intertext{where $\gamma(S)$ is the $\gamma$-bound of $\{S(t):\, t\in [0,T]\}$ 
and $L$ the Lipschitz constant in (H6).
To estimate (II) we proceed as in \eqref{eq:IV}, again with $[0,T]$ replaced by $I_\delta$:}
 (II) & = 
\Big\n t\mapsto\int_t^T S(s-t) \int_t^s (k_n(s,\sigma)-k_{n-1}(s,\sigma))\ud W(\sigma)\ud s\Big\n_{L^p(\Om;\gamma(I_\delta;X))}
\\ & \le \delta^{1/2} \gamma(S)\Big\n\sigma\mapsto \int_\sigma^T S(s-\sigma)(k_n(s,\sigma)-k_{n-1}(s,\sigma))\ud s\Big\n_{L^p(\Om;\gamma(I_\delta;X)))}
\\ & =  \delta^{1/2} \gamma(S) \n V_{n+1}-V_n\n_{L^p_{\F}(\Om;\g(I_\delta;X))},
\end{align*}
using \eqref{12.17-eq2} and \eqref{12.17-eq5} in the last step. Moreover, by Lemmas \ref{lem:gammaDeterm}(2)
and \ref{lem:triangular}, and \ref{ch-1-lemma1},
\begin{align*} 
\ & \n V_{n+1}-V_n\n_{L^p_{\F}(\Om;\g(I_\delta;X))}
 \\ & \qquad \le \delta^{1/2}\gamma(S)\n k_n-k_{n-1}\n_{L^p(\Om;\gamma(\Delta_\delta;X))}
 \\ & \qquad \eqsim_{p,X} \delta^{1/2}\gamma(S)\n k_n-k_{n-1}\n_{L^p(\Om;\gamma(I_\delta;\gamma(I_\delta;X)))}
 \\ & \qquad \lesssim_{p,X}\delta^{1/2}\gamma(S)\n g_n - g_{n-1}\n_{L^p_{\F}(\Om;\g(I_\delta;X))} 
 \\ & \qquad = \delta^{1/2}\gamma(S) \n  f(\cdot,U_{n}(\cdot), V_{n}(\cdot)) -  f(\cdot,U_{n-1}(\cdot), V_{n-1}(\cdot))\n_{L^p_{\F}(\Om;\g(I_\delta;X))} 
 \\ & \qquad \le L\delta^{1/2}\gamma(S)(\n U_n-U_{n-1}\n_{L^p_{\F}(\Om;\g(I_\delta;X))} + \n V_n-V_{n-1}\n_{L^p_{\F}(\Om;\g(I_\delta;X))}).
\end{align*}

Combining all estimates, we see that, if $\delta$ is small enough, the sequences
$\{U_n\}_{n\ge 1}$ and $\{V_n\}_{n\ge 1}$ converge in 
$L^p_{\F}(\Om;\g(I_\delta;X))$ to limits $U$ and $V$. It is clear that the pair $(U,V)$ is an $L^p$-solution on the interval $I_\delta$. 

\smallskip
{\em Step 2} -- The arguments in Step 1 show
that we  always obtain a unique mild
$L^p$-solution if $\d$ is small enough. 
Since the estimates involve constants that are
independent of $T$, $\delta$, and $u_T$, 
the proof may be repeated with
$I_\delta$ replaced by any interval
$[T-2\d,T-\d]$. In this way we can obtain a
global existence result by partitioning $[0,T]$
into finitely many such intervals, and the
successively solving the backwards equation
proceeding `from the right to the left'. This
gives us solutions for the backward equation on
each sub-interval, and it is easy to check that
a global solution is obtained by patching
together these local solutions.

\smallskip
{\em Step 3} -- 
Finally we prove the uniqueness of the solution. The
proof is very similar to the one for  $A=0$.
Suppose $(\wt U, \wt V)$ is another
$L^p$-solution to \eqref{eq:BSDE2}. Then from
the definition of the mild solution to
\eqref{eq:BSDE2}, we find that
\begin{equation}\label{eq:unique2}
\wt U(t) - U(t) + \int_t^T S(s-t)(\wt V(s) - V(s))\ud
W(s) = 0 
\end{equation}
for all $t\in [0,T]$
By taking conditional expectations with respect
to $\calF_t$ for \eqref{eq:unique2}, we see that
$ \wt U(t) - U(t) = 0$. Thus $\int_t^T
S(s-t)(\wt V - V)\ud W(s) = 0 $ for all $t\in
[0,T]$. Taking  $L^p$-means, using \cite[Theorem
3.5]{NVW07} it follows that
$$\n S(\cd-t)(\wt V - V)\n_{L^p(\Om;\ggX)}^p\eqsim_{p,X} 
\E \Big\n\int_t^T S(s-t)(\wt V(s) - V(s))\ud W(s)\Big\n^p  = 0.$$
Hence, for any $t\in [0,T]$, in $\gamma(t,T)$ we obtain the equality
$$
S(\cdot-t) \wt V(\cdot) = S(\cdot-t) V(\cdot).
$$
As before this proves that $\wt V=V$.
\end{proof}

\medskip
\noindent{\em Acknowledgment} -- The authors thank Mark Veraar for helpful comments.

\end{document}